\newcommand{\sm}{\setminus}
\definecolor{myRed}{rgb}{0.68, 0.05, 0.0}
\colorlet{myBlue}{blue!90!black}
\colorlet{myViolet}{myBlue!55!myRed}
\definecolor{darkraspberry}{rgb}{0.53, 0.15, 0.34}
\definecolor{olive}{rgb}{0.42, 0.56, 0.14}
\theoremstyle{definition}
\newtheorem{theorem}{Theorem}[section]
\newtheorem{lemma}[theorem]{Lemma}
\newtheorem{conjecture}[theorem]{Conjecture}
\theoremstyle{definition}
\title{Unavoidable induced subgraphs in graphs with complete bipartite induced minors}
\author[1]{Maria Chudnovsky}
\author[2]{Meike Hatzel}
\author[3]{Tuukka Korhonen}
\author[4]{Nicolas~Trotignon}
\author[5]{Sebastian Wiederrecht}
\affil[1]{Princeton University, Princeton, NJ 08544, USA}
\affil[2]{Discrete Mathematics Group, Institute for Basic Science,
  Daejeon, South Korea}
\affil[3]{University of Copenhagen, Denmark}
\affil[4]{CNRS, ENS de Lyon, Université Claude Bernard Lyon 1, LIP,
  UMR~5668, Lyon, France}
\affil[5]{KAIST, Daejeon, South Korea}
\date{\today}
\begin{document}
\maketitle

\begin{abstract}
We prove that if a graph contains the complete bipartite graph $K_{134, 12}$ as an induced minor, then it contains a cycle of length at most~12 or a theta as an induced subgraph.  With a longer and more technical proof, we prove that if a graph contains $K_{3, 4}$ as an induced minor, then it contains a triangle or a theta as an induced subgraph. Here, a \emph{theta} is a graph made of three internally vertex-disjoint chordless paths $P_1 = a \dots b$, $P_2 = a \dots b$, $P_3 = a \dots b$, each of length at least two, such that no edges exist between the paths except the three edges incident
to $a$ and the three edges incident to $b$.

A consequence is that excluding a grid and a complete bipartite graph as induced minors is not enough to guarantee a bounded tree-independence number, or even that the treewidth is bounded by a function of the size of the maximum clique, because the existence of graphs with large treewidth that contain no triangles or thetas as induced subgraphs is already known (the so-called layered wheels). 
\end{abstract}

\section{Introduction}

Graphs in this paper are finite and simple. A graph $H$ is an
\emph{induced subgraph} of a graph $G$ if $H$ can be obtained from $G$
be repeatedly deleting vertices. It is an \emph{induced minor} of $G$
if $H$ can be obtained from $G$ be repeatedly deleting vertices and
contracting edges.  It is a \emph{minor} of $G$ if $H$ can be obtained
from $G$ be repeatedly deleting vertices, deleting edges and
contracting edges.  We denote by $K_t$ the complete graph on $t$ vertices and by $K_{a,b}$ the complete bipartite graph  with sides of size $a$ and $b$.  The $(k\times k)$-grid is the graph whose vertices are the pairs $(i, j)$ of integers such that $1\leq i, j \leq k$ and where $(i, j)$ is adjacent to $(i', j')$ if and only if $|i-i'| + |j-j'| = 1$.  

The tree-independence number was introduced
in~\cite{DBLP:journals/jctb/DallardMS24}. 
It is defined via tree-decompositions similarly to treewidth, except that the number associated to each bag of a tree-decomposition is the maximum size of an independent set of the graph induced by the bag, instead of its number of vertices (we omit the full definition for the sake of brevity).
It attracted some attention lately, in particular because for each class of graphs with bounded
tree-independence number, there exists polynomial time algorithms for maximum independent set and other related problems~\cite{DBLP:journals/jctb/DallardMS24,DBLP:conf/icalp/DallardFGKM24,DBLP:conf/esa/LimaMMORS24,DBLP:conf/soda/Yolov18}.

The celebrated grid minor
theorem of Robertson and Seymour~\cite{DBLP:journals/jct/RobertsonS86}
states that there exists a function
$f \colon \mathds N \rightarrow \mathds N$ so that any graph with treewidth
at least $f(k)$ contains a $(k \times k)$-grid as a minor.  The
motivation for our work is the quest for a similar theorem with
``tree-independence number'' instead of ``treewidth''.

Here, the natural containment relation should be ``induced minor''
instead of ``minor'', since the tree-independence number is monotone
under taking induced minors but not under taking  minors.  The
following lemma shows that for triangle-free graphs, treewidth and
tree-independence number are essentially equivalent. 

\begin{lemma}[see \cite{DBLP:journals/jctb/DallardMS24}]
  \label{l:ramsey}
  The treewidth of a triangle-free graph with tree-independence number
  at most $t$ is at most $R(3, t+1)-2$ where $R(a, b)$ denotes the
  classical Ramsey number.
\end{lemma}

By \cref{l:ramsey}, grids and complete bipartite graphs have unbounded
tree-independence number.  The list of unavoidable graphs arising from
a large tree-independence number should therefore contain at least
large grids and large complete bipartite graphs.  Our main result is
that this list is not complete. Indeed, we prove that a construction
called \emph{layered wheel}, first defined
in~\cite{DBLP:journals/jgt/SintiariT21}, contains no
$(5\times 5)$-grid and no $K_{3, 4}$ as an induced minor, while having
arbitrarily large tree-independence number.

\subsection*{Outline of the proof}

To avoid too heavy notation, we allow some abuse. Typically, we do not
distinguish between a set of vertices in graph $G$ and the subgraph of
$G$ that it induces.  For instance, when $P$ is a path and $v$ is a
vertex in a graph $G$, we denote by $P\sm v$ either $V(P) \sm \{v\}$
or $G[V(P) \sm \{v\}]$.  Also, we may say that a set of vertices $C$
of $G$ is \emph{connected} when the correct statement should be that
$G[C]$ is connected.  We hope that this improves the readability
without causing any confusion.

We do not need to define layered wheels, which is good since the
definition is a bit long.  We just need some of their properties and some preliminary definitions to state them.  A \emph{theta} is a graph made
of three internally vertex-disjoint chordless paths $P_1 = a \dots b$,
$P_2 = a \dots b$, $P_3 = a \dots b$, each of length at least two, such
that no edges exist between the paths except the three edges incident
to $a$ and the three edges incident to $b$.  A graph is
\emph{theta-free} if it does not contain a theta as an induced subgraph,
and more generally, a graph is \emph{$H$-free} whenever it does not
contain $H$ (when $H$ is a graph) or any graph in $H$ (when $H$ is a class of graphs, such as thetas) as an induced subgraph.  The only property of layered wheels that we need is the following theorem that states their existence.

\begin{theorem}[see \cite{DBLP:journals/jgt/SintiariT21}]
  \label{th:lw}
  For all integers $t\geq 1$ and $k\geq 3$, there exists a theta-free graph of girth $k$ that contains $K_t$ as a minor.
\end{theorem}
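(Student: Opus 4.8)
The plan is to prove the theorem by an explicit construction, namely the \emph{layered wheels} of \cite{DBLP:journals/jgt/SintiariT21}, together with a verification of the three required properties: girth exactly $k$, the presence of a $K_t$ minor, and theta-freeness. Fix $k \geq 3$ and build a nested sequence of graphs $W_1 \subseteq W_2 \subseteq \dots \subseteq W_t$, where $W_1$ is a cycle of length $k$ and each $W_{i+1}$ is obtained from $W_i$ by adding a new \emph{level}: a fresh long induced cycle (the new ``outer rim'') together with a fresh vertex $h_{i+1}$ (the ``hub'' of the level), attached to $W_i$ by a prescribed edge pattern. The pattern is chosen so that all edges added at level $i+1$ join the new vertices to the outer rim of $W_i$ (never to deeper levels); the attachment points of $h_{i+1}$ on that rim are pairwise far apart along it; and the rims grow fast enough, as a function of $t$ and $k$, that $h_{i+1}$ still has a neighbour on every sufficiently long arc of the rim of $W_i$. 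One then proves by induction on $n$ that $W_n$ has girth $k$, has a $K_n$ minor in which every branch set meets the current outer rim in a long arc, and is theta-free.

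The \emph{girth} and the \emph{$K_t$ minor} are the easy parts, once the quantitative parameters are pinned down. For the girth: $W_1 = C_k$ has girth $k$ and remains an induced cycle of $W_n$, while any cycle of $W_n$ not already present in $W_{n-1}$ must use the top level, hence either runs along a long arc of the new rim or passes through $h_n$ between two of its attachment points, which are far apart; in either case it is long. For the $K_t$ minor: by induction $W_{n-1}$ has a $K_{n-1}$ minor with branch sets $B_1, \dots, B_{n-1}$, each spanning a long arc of the outer rim of $W_{n-1}$; since $h_n$ has a neighbour on every long arc of that rim, it has a neighbour in each $B_j$, so $B_1, \dots, B_{n-1}, \{h_n\}$ witness a $K_n$ minor of $W_n$, and the new outer rim is used to re-establish the ``long arc'' invariant for the next step.

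The technical heart of the argument is \emph{theta-freeness}, which I would prove by the same induction on the number of levels. Suppose $W_n$ contains a theta $\Theta$ with ends $a, b$ and internally disjoint induced paths $P_1, P_2, P_3$. If $h_n \notin V(\Theta)$, then $\Theta$ lies in $W_{n-1}$ together with possibly some vertices of the new rim; analysing how an induced theta can meet a long cycle that is attached to $W_{n-1}$ only through the widely spaced pattern, one shows that in fact $\Theta \subseteq W_{n-1}$, contradicting the induction hypothesis. If $h_n \in V(\Theta)$, then $h_n$ is either internal to some $P_j$ or equal to one of $a, b$; in every case its neighbours in $\Theta$ lie on the new rim and are pairwise far apart, so the paths of $\Theta$ leaving those neighbours must traverse long portions of the graph, and tracing them with the edge pattern of the top level forces either a chord inside some $P_j$ or an edge between two distinct $P_j$'s — contradicting the definition of a theta. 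I expect this last case analysis to be the main obstacle: it requires understanding the adjacency between a hub and its rim, and more generally the structure of holes in $W_n$, precisely enough to rule out every way in which three internally disjoint induced paths could be routed through a newly added level, and to check that no theta is created when the top level is attached.
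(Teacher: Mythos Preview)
The paper does not prove this theorem. It is quoted verbatim from \cite{DBLP:journals/jgt/SintiariT21} and used as a black box; the authors say explicitly that they ``do not need to define layered wheels, which is good since the definition is a bit long,'' and they only invoke the existence statement. So there is no proof in the paper to compare your proposal against.

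Regarding your sketch on its own merits: it has the right overall shape (an inductive layered construction, with girth and the $K_t$ minor following easily from the growth of the layers, and theta-freeness being the hard part), but the construction you describe is not the one in \cite{DBLP:journals/jgt/SintiariT21}. There, a new layer is not a fresh rim plus a fresh hub $h_{i+1}$; it is a single long path, and the vertices that play the role of ``hubs'' are vertices of \emph{previous} layers, each of which acquires carefully placed neighbours on the new path according to a specific recursive rule. This matters for theta-freeness: with your simplified attachment (one new hub seeing widely spaced points of the old rim), the two-case split ``$h_n \in V(\Theta)$ vs.\ $h_n \notin V(\Theta)$'' does not obviously suffice, because a putative theta can weave through several layers at once rather than being confined to the top one, and you have not said what prevents that. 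In the actual paper the theta-freeness proof is a substantial case analysis that tracks how holes can cross between layers, and that analysis depends on the precise attachment rule; your outline does not yet contain that ingredient. You are right to flag it as the main obstacle, but as written the proposal is a plan rather than a proof.
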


Since containing $K_t$ as a minor implies having treewidth at
least~$t$, layered wheels provide theta-free graphs of arbitrarily
large girth and treewidth. Their tree-independence number is also
arbitrarily large by \cref{l:ramsey}.  To fulfill our goal, it
therefore remains to prove that layered wheels do not contain large
grids or complete bipartite graphs as induced minors. As far as we can
see, this is non-trivial because even if layered wheels are precisely
defined, checking directly that they do not contain some
$(k\times k)$-grid or $K_{r, s}$ as an induced minor seems to be
tedious, at least according to our several attempts. An indication of
this is that some layered wheels do contain $K_{3, 3}$ as an induced
minor, which is not obvious, see \cref{f:k33Layered} (this figure is
meaningful only with the precise definition of a layered wheel).

\begin{figure}[!ht]
    \begin{center} 
    \includegraphics{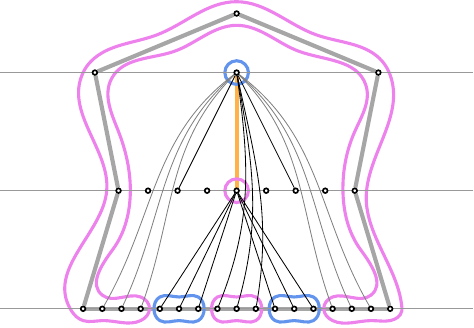}
    \end{center}
    \caption{A $K_{3, 3}$ induced minor in a layered wheel}
    \label{f:k33Layered}
\end{figure}

Our approach is therefore less direct: we study what induced subgraphs
are forced by the presence of a large grid or complete bipartite graph as an induced minor as we explain now.

\subsection*{Complete bipartite graphs}

The list of induced subgraphs forced by the presence of $K_{2, 3}$ as an induced minor is already known and not very difficult to obtain, see~\cite{DBLP:conf/iwoca/DallardDHMPT24}. But this is not enough for our purpose since containing $K_{2, 3}$ as an induced minor does not imply anything we can use. By a short argument, we first prove the following.

\begin{restatable}{theorem}{short}
  \label{th:134}
  If a graph $G$ contains $K_{134, 12}$ as an induced minor, then $G$
  contains a cycle of length at most~12 or a theta as an induced
  subgraph. 
\end{restatable}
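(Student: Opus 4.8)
The plan is to start from a $K_{134,12}$ induced minor, i.e.\ a collection of pairwise disjoint connected vertex sets $A_1,\dots,A_{134}$ (one side) and $B_1,\dots,B_{12}$ (the other side) such that each $A_i$ is adjacent to each $B_j$, no edge runs between distinct $A_i,A_{i'}$, and none between distinct $B_j,B_{j'}$. Assume $G$ has no cycle of length at most~$12$; in particular $G$ is triangle-free, so we must produce a theta. Since each $B_j$ induces a connected triangle-free graph, it contains an induced path $Q_j$; the first idea is to clean up each branch set so that it becomes an induced path. Concretely, for a fixed $B_j$, among the $134$ sets $A_i$ at most a bounded number can attach to $B_j$ ``badly''; by the pigeonhole principle (this is where the constant $134$ versus $12$ comes from) we can pass to a large sub-family of the $A_i$'s that all interact with all the $B_j$'s in a controlled way.

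The core step is a local analysis of how two branch sets on opposite sides meet. Fix $B_j$ and look at an $A_i$: since there are no short cycles, $A_i$ cannot have two neighbours on $B_j$ that are close together in $B_j$ without creating a short cycle through $A_i$ (once $A_i$ itself is trimmed to a shortest connecting structure). Trimming each $A_i$ to a minimal connected subset still hitting all chosen $B_j$'s, and each $B_j$ to a minimal connected subset still hit by all chosen $A_i$'s, one forces these sets to be induced paths (a connected triangle-free graph with no cycle of length $\le 12$ that is vertex-minimal with respect to a connectivity demand to boundedly many outside sets is essentially a tree with few leaves, and with enough room a path). The key point: with $12$ sets $B_1,\dots,B_{12}$ we have far more ``ports'' than needed, so after discarding a bounded number of the $A_i$'s we may assume every surviving $A_i$ attaches to each $B_j$ in exactly one vertex (or one short segment), and symmetrically. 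Now pick three surviving branch sets $A_1,A_2,A_3$ and two surviving branch sets $B_1,B_2$. The union $A_1\cup A_2\cup A_3$, together with paths inside $B_1$ and $B_2$ linking the three attachment points, yields three internally disjoint paths from a vertex region of $B_1$ to a vertex region of $B_2$; contracting/pruning gives three internally vertex-disjoint chordless paths between two vertices $a\in B_1$ and $b\in B_2$, each of length at least two (length at least two is where girth $\ge 13$, hence no short chords, is used), with no edges between them except at $a$ and $b$ (again because cross edges between distinct $A_i$'s are forbidden and cross edges inside $B_1$ or $B_2$ to the wrong path would create a short cycle). That is exactly a theta.

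The main obstacle is controlling the attachments: a single $A_i$ could a priori touch a single $B_j$ in many vertices spread out along the induced path inside $B_j$, and these attachments for different $j$ could interleave in complicated ways, so the ``trim to an induced path'' step is not literally immediate. I expect to handle this by first replacing each $B_j$ by an induced path $P_j$ realizing the connection to all chosen $A_i$'s and then, for each $A_i$, choosing inside $A_i$ a subtree connecting its designated attachment vertices on the $P_j$'s; a shortest-path / minimality argument plus the no-short-cycle hypothesis bounds the number of attachment vertices and the number of leaves, and a Ramsey-type pigeonhole over the $\binom{12}{?}$-many choices of which $B_j$'s an $A_i$ ``sees nicely'' lets us pass to a uniform sub-family. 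The numerology $134 = $ (something like $3 + $ a Ramsey/pigeonhole bound depending on $12$) and $12$ (enough ports to guarantee two usable ones after trimming, each of length $\ge 2$) should fall out of making these bounds explicit; the length-$12$ threshold in the conclusion matches the girth bound needed to prevent chords and short cross-cycles from collapsing the would-be theta.
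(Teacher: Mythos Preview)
Your outline correctly identifies the main obstacle --- a single $A_i$ may attach to a single $B_j$ at many scattered vertices --- but does not actually overcome it; the ``trim to an induced path plus Ramsey on attachment patterns'' step is where the argument stalls, and indeed a minimal $A_i$ need not be a path at all (it can be a tree with up to twelve leaves). The paper's proof supplies the missing mechanism. First, in a \emph{minimal} model each $G[A_i]$ has no cycle longer than the degree of the corresponding vertex, namely~$12$ (\cref{lem:girthtree}); since we assume girth at least~$13$, every $G[A_i]$ is a tree. Second, every leaf $v$ of this tree has a \emph{private} $B_j$, meaning $v$ is the unique vertex of $A_i$ with a neighbour in $B_j$ (\cref{lem:deg1inbranch}). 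Thus whenever $|A_i|\ge 3$ one gets two non-adjacent leaves, each with its own private $B$, and one labels $A_i$ by that unordered pair. The constant is then transparent: $134-1=2\binom{12}{2}+1$, so among the at least $133$ large $A_i$'s three share the same label $\{B_j,B_k\}$, yielding an induced minor of $K^{*}_{2,3}$ (the paths in the three $A_i$'s have length $\ge 2$ because the leaves are non-adjacent). A separate one-line pigeonhole handles the residual case where two $A_i$'s have at most two vertices.

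Your final step --- choosing vertices $a\in B_1$, $b\in B_2$ and three chordless paths between them --- is also not correct as stated: even after the private-leaf trick, the three attachment points inside $B_1$ are three distinct vertices, and routing them to a common $a$ inside $B_1$ may produce a pyramid or a prism rather than a theta, or create chords between the three paths. The paper does not attempt a direct construction here; instead it invokes \cref{lem:subk23im}, which says that any $K^{*}_{2,3}$ induced minor forces a 3PC (the proof uses the type analysis of \cref{sec:type}), and then observes that in a triangle-free graph the only 3PC is a theta. Without that lemma (or an equivalent structural argument), the passage from ``three $A_i$'s and two $B_j$'s'' to ``theta'' is a genuine gap.
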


The advantage of \cref{th:134} is its short proof. But its statement
is far from being optimal, as shown by the following, that we obtain
by a more careful structural study.

\begin{theorem}
  \label{th:k34ThetaTriangle}
  If a graph $G$ contains $K_{3, 4}$ as an induced minor, then $G$
  contains a triangle or a theta as an induced subgraph. 
\end{theorem}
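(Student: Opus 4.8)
The plan is to assume that $G$ contains $K_{3,4}$ as an induced minor but is triangle-free and theta-free, and derive a contradiction. Fix an induced minor model: pairwise disjoint, connected vertex sets $A_1,A_2,A_3$ (the ``small side'') and $B_1,B_2,B_3,B_4$ (the ``big side''), with all seven sets inducing connected subgraphs, with an edge between $A_i$ and $B_j$ for all $i,j$, and with no edge between $A_i$ and $A_{i'}$ or between $B_j$ and $B_{j'}$ for $i\neq i'$, $j\neq j'$. Since we only need the existence of such a model, I would first clean it up: choose the model minimizing $\sum|A_i|+\sum|B_j|$, which forces each $A_i$ and each $B_j$ to be a chordless path (or a single vertex) whose endpoints carry the attachments — this is the standard minimality trick, and triangle-freeness keeps these paths induced and makes the neighbourhoods well-behaved. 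The key structural object is, for each pair $(i,j)$, a shortest path $R_{ij}$ realising the connection, and I would track how the $A_i$'s see the various $B_j$'s.

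The heart of the argument is a theta-detection step. Consider the three sets $A_1,A_2,A_3$ together with a single $B_j$: contracting $B_j$ to one vertex $b$ and each $A_i$ to a path, we get three paths from (a vertex near) $b$ fanning out; more usefully, consider two of the big-side sets, say $B_1$ and $B_2$, contracted to vertices $b_1,b_2$, together with the three sets $A_1,A_2,A_3$, which connect $b_1$ to $b_2$ by three ``parallel'' routes. If those three routes were pairwise nonadjacent except at the ends, we would essentially have a theta (after contracting $B_1,B_2$ to single vertices — but a theta is an induced subgraph statement, so one must instead extract an honest induced theta by shortest-path / parity arguments inside $G$, using triangle-freeness to rule out the degenerate ``prism'' and ``pyramid'' outcomes). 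So in a triangle-free, theta-free $G$ there must be many edges between the $A_i$'s routes and the $B_j$'s, i.e. the sets $A_i$ must be ``large'' and intertwined with the $B_j$'s. I would make this precise: show that for each $i$, the set $A_i$ together with the $B_j$'s it attaches to contains either a short cycle (impossible if we also assumed large girth — but here we only have triangle-free, so a $C_4$ or $C_5$ is allowed) or a theta, unless there is heavy adjacency. The cleanest route is probably to invoke or re-prove the $K_{2,3}$-induced-minor structure from~\cite{DBLP:journals/corr/abs-2402-08332} locally: any two $A_i,A_{i'}$ plus two $B_j,B_{j'}$ give a $K_{2,2}$ induced minor, and pushing to three on each side is where the extra force comes from.

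Concretely, the step I expect to carry the argument is analysing a vertex $a \in A_1$ (say an endpoint of the path $A_1$) that has neighbours into $B_1,B_2,B_3,B_4$: because $G$ is triangle-free, the neighbours of $a$ form an independent set, and they lie in distinct $B_j$'s (or we can arrange this), so $a$ is the centre of an induced $K_{1,4}$ reaching into four pairwise-nonadjacent connected sets; combining with a second such vertex from $A_2$ and the connectivity of the $B_j$'s, one builds either a theta (three disjoint induced paths between $a$ and the $A_2$-vertex through three different $B_j$'s, with the fourth $B_j$ and $A_3$ used to keep parities/adjacencies under control) or forces a triangle. The delicate point is ensuring the three chosen paths are pairwise anticomplete except at their common endpoints — edges between $B_j$ and $B_{j'}$ are forbidden by the model, edges within a single $A_i$ are controlled by minimality, but edges between $A_i$ and $B_j$ for the ``wrong'' pairs $(i,j)$ are exactly what could destroy the theta, so the bulk of the proof is a case analysis on which such edges are present, each case yielding a triangle, a theta, or a reduction producing a smaller model.

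The main obstacle, as in \cref{th:134} but now without the luxury of forbidding all short cycles, is that triangle-freeness alone does not kill $C_4$'s and $C_5$'s, so one cannot simply say ``a short cycle appears, done'' — instead every configuration that would give a short cycle must be massaged into a genuine theta, and the hard work is the bookkeeping that shows the fourth column $B_4$ (the reason $K_{3,4}$ rather than $K_{3,3}$ suffices) always supplies the extra vertex-disjoint route or the extra adjacency needed to complete a theta. I would organise this as a sequence of claims: (i) the model can be taken with each part a chordless path; (ii) each $A_i$-endpoint has a controlled neighbourhood in the $B_j$'s; (iii) if some $A_i$ is ``short'' (few attachments collapse), a theta appears directly; (iv) otherwise a counting/pigeonhole argument on edges between the $A$-side and the $B$-side, together with the four $B_j$'s, forces a theta; and finally assemble (iii)--(iv) into the contradiction.
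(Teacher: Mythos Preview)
Your plan points in the right direction but has a genuine gap and misses the structural lemma that makes the argument terminate.

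First, a concrete issue: you assert that minimising the model forces each $A_i$ and each $B_j$ to be a chordless path, calling this ``the standard minimality trick''. Minimality by itself only gives that each branch set is a tree (via an argument like \cref{lem:girthtree}); a minimal connected set required to touch three pairwise anticomplete sets could a priori be a subdivided claw or carry a triangle of attachments, not a path. That it \emph{is} a path is precisely the content of the type analysis in \cref{l:type}, \cref{l:OnePath} and \cref{l:allPath}: in a 3PC-free graph, any connected set seeing three pairwise anticomplete sets, in the presence of another such set anticomplete to it, must be of ``type path'', and moreover all such sets share a common centre. Proving this already requires detecting thetas, prisms and pyramids in the claw and triangle configurations. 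You are treating as preliminary cleanup what is in fact a substantial part of the proof.

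Second, your endgame cannot work as stated. You propose to build a theta from two $A_i$'s through three $B_j$'s by local case analysis, with the fourth $B_j$ ``supplying an extra route''. But $K_{3,3}$ \emph{does} occur as an induced minor of (triangle, theta)-free graphs (see \cref{f:best} and \cref{f:situations}), so no case analysis confined to a $K_{3,3}$ sub-model can ever produce a contradiction, and your sketch gives no concrete mechanism by which adding one more $B_j$ converts a non-theta configuration into a theta. The paper's route is different in kind: it first establishes a precise description of how $K_{3,3}$ sits inside a 3PC-free graph (\cref{l:exact1}), whose corollary (\cref{l:exact2}) is that among three minimal branch sets on one side, \emph{exactly one} has at most two vertices. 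The fourth branch set is then used purely for pigeonhole: apply \cref{l:exact2} to $\{A,B,C\}$, then to $\{B,C,D\}$, obtain two small sets among the four, and a third application yields the contradiction. Your items (iii)--(iv) hint at a size dichotomy, but without the ``exactly one small'' statement there is nothing for a pigeonhole argument to bite on, and the unstructured case analysis you describe has no visible termination.
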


Once \cref{th:134} (or~\ref{th:k34ThetaTriangle}) is proved, checking
that layered wheels contain no $K_{134, 12}$ (or no $K_{3, 4}$) as an
induced minor becomes trivial since by \cref{th:lw}, they
do not contain short cycles and thetas as induced subgraphs.  Our results therefore avoid some tedious checking, but we also believe
that they are of independent interest.

Note that \cref{th:k34ThetaTriangle} is best possible in several ways.
First, thetas need to be excluded because the subdivisions of
$K_{3, 4}$ provide triangle-free graphs that obviously contain
$K_{3, 4}$ as an induced minor while containing thetas and no
triangles.  Triangles must be excluded because of line graphs of
subdivisions of $K_{3, 4}$, see \cref{f:line} where the line graph of
the graph obtained by subdividing each edge of $K_{3, 4}$ is
represented: it is a theta-free graph that obviously contains
$K_{3, 4}$ as an induced minor.  A less obvious construction is
represented in \cref{f:best}, showing that $K_{3, 4}$ cannot be
replaced by $K_{3, 3}$ in \cref{th:k34ThetaTriangle}.  Also, for all
$\ell \geq 1$, there exist (theta, triangle)-free graphs that contain
$K_{2, \ell}$ as an induced minor, see \cref{f:K2l}

\begin{figure}[!ht]
    \begin{center}
       \includegraphics[width=9cm]{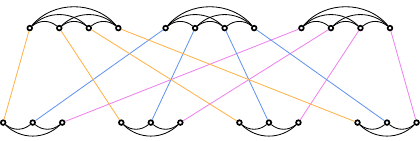}
    \end{center}
    \caption{The line graph of the graph obtained by subdividing each
      edge of $K_{3, 4}$}
    \label{f:line}
\end{figure}

\begin{figure}[!ht]
    \begin{center}
        \includegraphics[width=5cm]{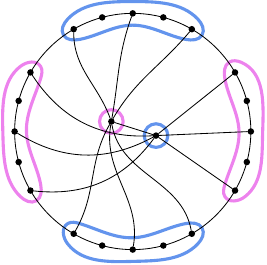}
    \end{center}
    \caption{A (triangle, theta)-free graph containing $K_{3, 3}$ as an
    induced minor}
    \label{f:best}
\end{figure}

\begin{figure}[!ht]
    \begin{center}
        \includegraphics[width=8cm]{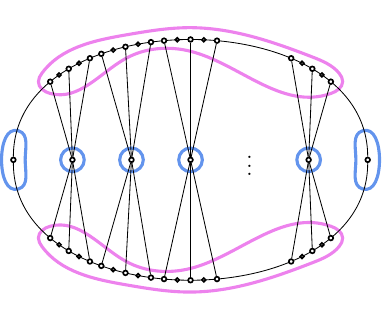}
    \end{center}
    \caption{A (theta, triangle)-free graph containing $K_{2, \ell}$ as
      an induced minor}
    \label{f:K2l}
\end{figure}

In fact, we prove a more general result than \cref{th:k34ThetaTriangle}.
Before stating it, we need some definitions.

A \emph{prism} is a graph made of three vertex-disjoint chordless
paths $P_1 = a_1 \dots b_1$, $P_2 = a_2 \dots b_2$,
$P_3 = a_3 \dots b_3$, such that $\{a_1,a_2,a_3\}$ and $\{b_1,b_2,b_3\}$ are triangles and no edges
exist between the paths except those of the two triangles and either:

\begin{itemize}
 \item $P_1$, $P_2$ and $P_3$ all have length at least~1, or 
 \item one of $P_1, P_2$, $P_3$ has length~0 and each of the other two has length at least~2.
\end{itemize}
 
Note that allowing a path of length zero in prisms (for instance if $a_1=b_1$)
is not standard, but natural in our context.
A prism with a path of length zero is sometimes referred to as a \emph{line wheel},
but we do not use this terminology here.

\begin{figure}[!ht]
    \centering
    \includegraphics{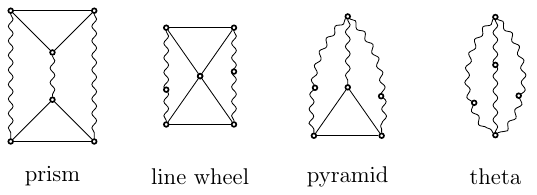}
    \caption{The different $3$-path configurations}
    \label{fig:3PC}
\end{figure}

A \emph{pyramid} is a graph made of three chordless paths
$P_1 = a \dots b_1$, $P_2 = a \dots b_2$, $P_3 = a \dots b_3$, each of
length at least one, and two of them with length at least two,
vertex-disjoint except at $a$, and such that $\{b_1,b_2,b_3\}$ is a
triangle and no edges exist between the paths except those of the
triangle and the three edges incident to~$a$.

A \emph{hole}
in a graph is a chordless cycle of length at least~4.
A graph that is either a theta, a prism or a pyramid is called a
\emph{3-paths configuration}, or \emph{3PC} for short.   Observe that
a graph $G$ is a 3PC if and only if there exist three pairwise
internally vertex disjoint paths $P_1$, $P_2$, $P_3$ such that $V(G) =
V(P_1) \cup V(P_2) \cup V(P_3)$ and for every distinct $i, j\in \{1, 2, 3\}$, $V(P_i) \cup V(P_j)$ induces a hole. 

\begin{restatable}{theorem}{longer}
  \label{th:k343PC}
  If a graph $G$ contains $K_{3, 4}$ as an induced minor, then $G$
  contains a 3PC as an induced subgraph. 
\end{restatable}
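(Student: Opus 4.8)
The plan is to argue by contradiction: assume $G$ contains $K_{3,4}$ as an induced minor but contains no theta, prism, or pyramid as an induced subgraph (call such a graph \emph{3PC-free}). Fix a model of the induced minor, i.e.\ pairwise disjoint sets $A_1,A_2,A_3,B_1,B_2,B_3,B_4\subseteq V(G)$, each inducing a connected subgraph, with an edge between $A_i$ and $B_j$ for all $i,j$, no edge between $A_i$ and $A_{i'}$ for $i\ne i'$, and no edge between $B_j$ and $B_{j'}$ for $j\ne j'$; among all such models choose one minimising the total number of vertices used, and for each $i,j$ fix an edge $a_{ij}b_{ij}$ with $a_{ij}\in A_i$ and $b_{ij}\in B_j$. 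The first phase is a structural normalisation of the branch sets. Minimality forbids replacing $B_j$ by a proper connected subset still meeting all of $A_1,A_2,A_3$, so every vertex of $G[B_j]$ outside the (at most three) attachment vertices $b_{1j},b_{2j},b_{3j}$ is a cut-vertex of $G[B_j]$; a short block-tree argument then shows that $G[B_j]$ contains no hole, that every cycle in it is a triangle, and in fact that $G[B_j]$ is either an induced subdivided claw (possibly degenerating to a path) with at most three legs, or a triangle with at most three induced (possibly trivial) legs, one per triangle vertex. Call the vertex where the legs meet the \emph{root} in the first case, and the three triangle vertices the roots (one per leg) in the second. We do not need to classify the $G[A_i]$ similarly: only connectivity of $G[A_i]$ together with the presence of the attachment vertices will be used.

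In the second phase I assemble the configuration. Pick two of the $B$'s, say $B_1$ and $B_2$, to be the two ``ends'', and keep $B_3,B_4$ in reserve. For each $i\in\{1,2,3\}$, a shortest path from $b_{i1}$ to $b_{i2}$ in $G[A_i\cup\{b_{i1},b_{i2}\}]$ is an induced path $R_i$ whose interior lies in $A_i$ and whose length is at least $2$ (since $B_1$ and $B_2$ are non-adjacent). Splicing $R_i$ to the leg of $B_1$ from its root to $b_{i1}$ and to the leg of $B_2$ from $b_{i2}$ to its root yields a path $P_i$ whose $B_1$-end is the root associated with the $i$-th leg of $B_1$, and whose $B_2$-end is defined symmetrically. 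The three paths $P_1,P_2,P_3$ are internally vertex-disjoint, since $A_i\cap A_{i'}=\emptyset$, $B_1\cap B_2=\emptyset$, and within each $B_j$ distinct legs meet only at the root or along the triangle; each has length at least $2$; their $B_1$-ends coincide in the claw case and are the three vertices of a triangle in the triangle case, and likewise on the $B_2$-side; and there is no edge between a $B_1$-end and a $B_2$-end because $B_1$ and $B_2$ are non-adjacent. Consequently, provided $G$ induces no edge on $V(P_1)\cup V(P_2)\cup V(P_3)$ beyond the three paths and the (zero, one, or two) end-triangles, this subgraph is a theta if $B_1,B_2$ are both claw-type, a pyramid if exactly one of them is triangle-type, and a prism if both are -- in every case a 3-path configuration, contradicting our assumption, and we would be done.

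Everything therefore reduces to guaranteeing that $P_1\cup P_2\cup P_3$ carries no unwanted edge, and this is where I expect the real work -- and the reason the argument is ``longer and more technical'' than that of \cref{th:134} -- to lie. Because there is no edge between distinct $A_i$'s and none between $B_1$ and $B_2$, and because the first phase already makes $G[B_1]$ and $G[B_2]$ clean, an unwanted edge must join an interior (hence $A_i$-) vertex of some $P_i$ to a vertex of a leg of $B_1$ or of $B_2$ -- either a leg lying on $P_i$ itself (a chord of $P_i$) or a leg lying on some $P_{i'}$ with $i'\ne i$ (a genuine crossing). For each such edge I would show that one of the following occurs: it lets us replace some $R_i$ by a shorter path, or re-root some $B_j$, or otherwise shrink the model, contradicting minimality; or it already exhibits a theta, prism, or pyramid on a subset of the vertices at hand; or it pins the local structure down to a rigid picture (morally a wheel around a short cycle) that can be dismantled by rerouting one of $P_1,P_2,P_3$ through one of the reserve branch sets $B_3$ or $B_4$ -- each of which is adjacent to all of $A_1,A_2,A_3$. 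This last possibility is exactly where $K_{3,4}$ is needed in place of $K_{3,3}$: two reserve branch sets -- enough parallel capacity beyond the three paths of a theta -- are what make the rerouting always succeed, and the graph of \cref{f:best} shows that with a single reserve it can genuinely fail. Finally, \cref{th:k34ThetaTriangle} is an immediate corollary, since a theta is a theta while a prism and a pyramid each contain a triangle.
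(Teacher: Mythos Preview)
Your proposal is a plan rather than a proof. You correctly locate the whole difficulty in the ``unwanted edges'' between the interiors of the $R_i$ and the legs of $B_1,B_2$, but you do not carry out that analysis: you only list three kinds of outcomes (shrink the model, find a 3PC directly, or reroute through $B_3,B_4$) without verifying that one of them always applies. Since this case analysis \emph{is} the theorem, the proposal has a genuine gap. It is also unclear how your rerouting mechanism terminates: $B_3$ and $B_4$ attach to the same $A_i$'s and can carry unwanted edges of their own, so ``swap in a reserve $B$'' is not obviously progress. Your Phase~1 is essentially \cref{l:type} plus minimality and is fine as far as it goes, but it does not yet use 3PC-freeness; once you do (via \cref{l:OnePath}), the claw- and triangle-types for $B_j$ disappear and every minimal $B_j$ is already a path, so part of your case distinction in Phase~2 is vacuous.

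The paper takes a different route that sidesteps the chord-chasing entirely. After \cref{l:OnePath} and \cref{l:allPath} force all branch sets in a $K_{3,3}$ model (in a 3PC-free graph) to be of type path with a common center, \cref{l:exact1} pins the model down exactly: it is a hole $H$ together with two chord-paths $B'$ and $Q$, each of length at most~$1$ and each with at least three neighbours on its segment of $H$. The payoff is purely numerical (\cref{l:exact2}): among the three branch sets on the side assumed minimal, \emph{exactly one} has at most two vertices. The proof of \cref{th:k343PC} is then a three-line pigeonhole: with four minimal-side sets $A,B,C,D$, apply \cref{l:exact2} to $\{A,B,C\}$ to get (say) $|A|\le 2<|B|,|C|$; apply it to $\{B,C,D\}$ to force $|D|\le 2$; then $\{A,B,D\}$ has two small sets, contradicting \cref{l:exact2}. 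All the structural work lives in the $K_{3,3}$ lemma, and the step from $K_{3,3}$ to $K_{3,4}$ is a counting argument rather than a rerouting argument.
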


\cref{th:k343PC} clearly implies \cref{th:k34ThetaTriangle} because a
3PC that is not a theta contains a triangle.  Proving \cref{th:k343PC}
is just slightly longer than \cref{th:k34ThetaTriangle}, but is also
interesting in its own right for the following reason.   A hole is \emph{even} if
it has an even number of vertices. Computing the
maximum independent set of an even-hole-free graph in polynomial time
is a well known open question, see~\cite{vuskovic:evensurvey}.  Hence, understanding the
tree-independence number of even-hole-free graphs would be
interesting. Moreover, \cref{th:k343PC} implies directly the existence of
even-hole-free graphs of large tree-independence number that contain
no large grids and no $K_{3, 4}$ as induced minors.
Indeed,~\cite{DBLP:journals/jgt/SintiariT21} not only provides the
layered wheels that we already mentioned, but a variant, called
\emph{even-hole-free layered wheels}, whose existence is stated in the next theorem.

\begin{theorem}[see \cite{DBLP:journals/jgt/SintiariT21}]
  \label{th:ehf-lw}
  For all integers $t\geq 1$, there exists a ($K_4$, even hole,
  3PC)-free graph that contains $K_t$ as a minor.
\end{theorem}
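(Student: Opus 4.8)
The plan is to recall the \emph{even-hole-free layered wheel} construction of~\cite{DBLP:journals/jgt/SintiariT21} and to verify that, with enough layers, it has all three stated properties. That construction is recursive in the number of layers~$\ell$: one starts from a small base graph $L_0$ (a sufficiently long hole together with a designated ``outer path'' lying on it) and builds $L_{i+1}$ from $L_i$ by adding a fresh set $N_{i+1}$ of vertices, each made adjacent to a carefully chosen interval of the current outer path of $L_i$, after which (part of) the old outer path together with $N_{i+1}$ becomes the outer path of $L_{i+1}$. The lengths of the attachment intervals and the way consecutive new vertices overlap are prescribed by a fixed parity pattern; this parity discipline is the whole point of the construction and is precisely what will make the hole analysis work.

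First I would establish the clique minor. By construction each new layer $N_{i+1}$ contains a connected ``strip'' that touches the strips of all earlier layers; contracting the $i$-th strip to one branch vertex for $i=0,\dots,\ell$ exhibits $K_{\ell+1}$ as a minor, the same ``nested intervals'' phenomenon that gives ordinary layered wheels large treewidth. Hence taking $\ell = t-1$ produces a graph containing $K_t$ as a minor, and it remains only to check the three exclusions.

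The $K_4$-freeness is the easy part: from the recursive description one checks directly that no four vertices are pairwise adjacent, since the triangles of $L_\ell$ are of a few explicit types (each arising from a vertex of some $N_i$ together with two consecutive vertices of the path it attaches to, or from a bounded interaction between two consecutive layers), and none of these extends to a $K_4$. The main obstacle is the even-hole analysis, which I would prove by induction on $\ell$: one shows that every hole of $L_{i+1}$ is either already a hole of $L_i$ or uses only a controlled, bounded set of vertices of $N_{i+1}$, and in the latter case one computes its length from the interval lengths and the parity pattern and finds it odd. This bookkeeping is the technical heart of~\cite{DBLP:journals/jgt/SintiariT21}: one must enumerate how a chordless cycle can enter and leave the newest layer, rule out the configurations that would use two ``far apart'' new vertices of $N_{i+1}$, and verify that in each surviving case the two sides of the cycle contribute lengths of opposite parity.

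Finally, given even-hole-freeness, 3PC-freeness reduces to pyramid-freeness: a short parity argument shows that every theta and every prism contains an even hole (the three pairwise holes of a theta have lengths $\ell_i+\ell_j$ and those of a prism lengths $\ell_i+\ell_j+2$, and three integers cannot pairwise sum to odd values), so a $(K_4,\text{even hole})$-free graph is automatically theta-free and prism-free. To finish, I would check that $L_\ell$ contains no induced pyramid: a pyramid has a triangle $b_1b_2b_3$ and an apex joined to it by three internally disjoint chordless paths, two of length at least~$2$; using the explicit list of triangles in the layered wheel, one argues that from any such triangle these three paths cannot all be routed through the layered structure without creating a chord, creating an even hole, or violating the length-$\ge 2$ requirement. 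Assembling these verifications gives, for every $t$, a $(K_4,\text{even hole},\text{3PC})$-free graph containing $K_t$ as a minor.
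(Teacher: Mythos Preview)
The paper does not prove this theorem at all: it is stated with the attribution ``see~\cite{DBLP:journals/jgt/SintiariT21}'' and used as a black box, with no argument or even a sketch. Your proposal therefore goes well beyond what the paper does, by outlining the construction and verification from the cited reference.

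As a sketch of the Sintiari--Trotignon argument, your outline is broadly reasonable. The parity reduction (an even-hole-free graph is automatically theta-free and prism-free, so among 3PC's only pyramids need to be excluded directly) is correct and is exactly how that paper organises things. The remaining claims---$K_4$-freeness, even-hole-freeness by induction on the layers, pyramid-freeness, and the $K_t$ minor via nested strips---are the right headings, but in your write-up they are essentially promissory (``one checks directly'', ``bookkeeping'', ``one argues that\dots''). That is fine as a plan, but none of it constitutes a proof without the actual case analysis from~\cite{DBLP:journals/jgt/SintiariT21}; in particular the even-hole and pyramid analyses there are lengthy and not something one can wave through. If you intend to include a proof here rather than cite the result, you would need to reproduce that case analysis in full.
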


So, the simple counter-part of the Robertson and Seymour grid theorem,
that would state that graphs with large tree-independence number
should contain a large grid or a large complete bipartite graph as an
induced minor is false, even when we restrict ourselves to
even-hole-free graphs.  Even-hole-free layered wheels provide a
counter-example (note that by \cref{l:ramsey}, being $K_4$-free
ensures that the high treewidth implies a high tree-independence
number).

\cref{th:k343PC} is best possible in some sense as we explain
now. Thetas are essential in the statement because subdivisions of
$K_{3, 4}$ are easily seen to contain thetas.  Also prisms are needed
because of line graphs of
subdivisions of $K_{3, 4}$ that contain prisms (see \cref{f:line}).  We have to
allow a path of length zero in a prism because of the graph depicted
in \cref{f:3PC-prismZero} that contains $K_{3, 4}$ as an induced minor
while the only 3PC in it is a prism with a path of length zero.  Maybe
pyramids are not needed in the statement, but we need them in the
proof for reasons that we now explain.

\begin{figure}[!ht]
  \begin{center}
 \includegraphics[width=7cm]{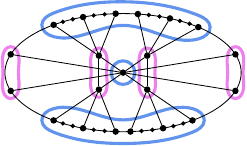}
 \caption{$K_{3,4}$ as an induced minor while all 3PC's are prisms
   with a path of length zero\label{f:3PC-prismZero}}
 \end{center}
\end{figure}

The proof of \cref{th:k343PC} relies on a precise description of how
$K_{3, 3}$ can be contained as an induced minor in a 3PC-free graph,
see \cref{l:exact1}.  The description is too long to be stated in the
introduction, but \cref{f:situations} provides representations of the
different possible situations.

\begin{figure}[!ht]
  \begin{center}
    \includegraphics[width=0.8\textwidth]{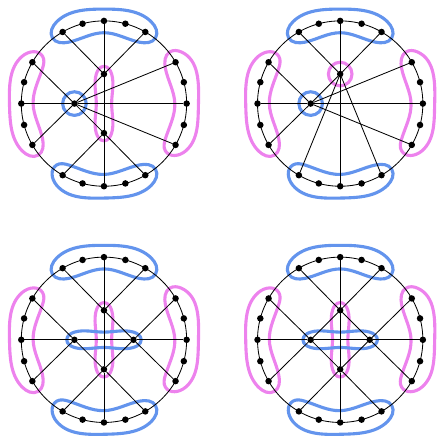}
        \caption{$K_{3,3}$ as an induced minor in 3PC-free graphs}
    \label{f:situations}
 \end{center}
\end{figure}

 Observe that excluding pyramids is necessary in \cref{l:exact1},
 because of the graph presented in \cref{f:3PC-pyramid}, that
 contains $K_{3, 3}$ as induced minor, while the only 3PC in it is a
 pyramid.

\begin{figure}[!ht]
  \begin{center}
 \includegraphics{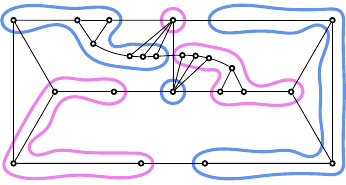}
 \caption{$K_{3, 3}$ as an induced minor in a (theta, prism)-free
   graph\label{f:3PC-pyramid}}
 \end{center}
\end{figure}

\subsection*{Grids}

Grids are easier to handle than complete bipartite graphs as shown by the next lemma whose proof is less involved than the proofs of Theorems~\ref{th:134}, \ref{th:k34ThetaTriangle} or~\ref{th:k343PC}. We do not know if a $(4\times 4)$-grid as an induced minor is enough to guarantee the presence of a 3PC as an induced subgraph.  

\begin{restatable}{lemma}{lfivefive}
\label{l:55}
    If a graph $G$ contains a $(5\times 5)$-grid as an induced minor, then $G$ contains a 3PC as an induced subgraph.
\end{restatable}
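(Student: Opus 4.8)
The plan is to pull a theta-shaped arrangement out of the grid, push it through the branch sets into $G$, and then extract a genuine 3-path configuration by a short minimality argument. Fix branch sets $(B_{i,j})_{1\le i,j\le 5}$ realising the $(5\times 5)$-grid $M$ as an induced minor of $G$, so the $B_{i,j}$ are pairwise disjoint, each induces a connected subgraph of $G$, and for distinct $(i,j),(i',j')$ there is an edge of $G$ between $B_{i,j}$ and $B_{i',j'}$ exactly when $(i,j)$ and $(i',j')$ are adjacent in $M$. Let $a=(1,3)$, $b=(5,3)$, and let $P_1,P_2,P_3$ be the three induced paths from $a$ to $b$ in $M$ running respectively through column~$1$ (and rows $1$ and $5$), straight down column~$3$, and through column~$5$ (and rows $1$ and $5$). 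A direct check gives: the $P_k$ are internally disjoint; no interior vertex of one is adjacent in $M$ to an interior vertex of another; $a\not\sim b$; and $a$ (resp.\ $b$) is adjacent in $M$ to exactly the first (resp.\ last) interior vertex of each $P_k$, and to no other vertex of $P_1\cup P_2\cup P_3$. (In particular $M$ itself contains a theta, but this alone says nothing about $G$.)

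Now put $A=B_a$, $B=B_b$, and $U_k=\bigcup\{\,B_v:\ v\text{ an interior vertex of }P_k\,\}$ for $k=1,2,3$. Consecutive branch sets along $P_k$ are joined by an edge, so $A$, $B$, $U_1$, $U_2$, $U_3$ each induce a connected subgraph of $G$. Since an edge of $G$ can only join two branch sets whose grid vertices are adjacent, the adjacency data above becomes: there are no edges between $A$ and $B$; there are no edges between $U_k$ and $U_\ell$ for $k\neq\ell$; and each $U_k$ has a neighbour in $A$ and a neighbour in $B$. Hence \cref{l:55} follows from the self-contained claim: \emph{if a graph has pairwise disjoint connected sets $A,B,U_1,U_2,U_3$ such that there is no edge between $A$ and $B$, no edge between $U_k$ and $U_\ell$ for $k\neq\ell$, and each $U_k$ has a neighbour in both $A$ and $B$, then $A\cup B\cup U_1\cup U_2\cup U_3$ induces a subgraph containing a $3$-path configuration.}

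To prove the claim, take a subgraph $G'$ of $G[A\cup B\cup U_1\cup U_2\cup U_3]$ that is minimal (fewest vertices) subject to still carrying such a configuration, with witnessing partition $A',B',U_1',U_2',U_3'$ of $V(G')$. Minimality forces a lot of structure: each $U_k'$ is a chordless path with one end adjacent to $A'$, the other end adjacent to $B'$, and no other vertex adjacent to $A'$ or to $B'$ (otherwise one deletes a vertex and shrinks the configuration); and, since only three ``attachment'' vertices of $A'$ are ever needed, $G'[A']$ is a connected graph of one of a handful of shapes — a single vertex; a chordless path or a subdivided claw (three paths sharing one end) with the attachments at its leaves; a triangle whose three vertices are the attachments; or a triangle with one or two pendant paths whose far ends carry the remaining attachments — and likewise for $G'[B']$. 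Running over the finitely many shape-combinations for $(G'[A'],G'[B'])$ and routing the three paths through the $U_k'$ then exhibits a $3$-path configuration in $G'$ in every case: a theta when neither $A'$ nor $B'$ contains a triangle (a vertex of degree three in $G'[A']$ or $G'[B']$ becoming an end of the theta), a pyramid when exactly one of them contains a triangle, and a prism when both do. This proves the claim, and together with the preceding paragraph, \cref{l:55}.

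The substantive part is this last claim: one cannot read a 3PC directly off the grid picture because $A=B_a$ and $B=B_b$ may be large branch sets that attach to the $U_k$ along awkward subsets, so the configuration found in $G$ is genuinely less rigid than a subdivided $K_{2,3}$. The minimality reduction is what removes this freedom, and the two points needing care are (i) checking that minimality really does force the $U_k'$ to be clean chordless paths and $G'[A'],G'[B']$ to have the listed shapes (a block-structure argument bounding the number of vertices that are ``needed'' either to keep $A'$ connected or to keep some $U_k$ attached), and (ii) verifying that each of the resulting shape-combinations contains a theta, a prism, or a pyramid as an \emph{induced} subgraph rather than some other, possibly non-induced, subgraph. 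The explicit choice of the three grid paths and the passage through the branch sets are routine.
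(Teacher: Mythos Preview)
Your reduction to a ``self-contained claim'' is the natural move, but the claim as you state it is \emph{false}: containing $K_{2,3}$ as an induced minor (which is exactly what your five sets $A,B,U_1,U_2,U_3$ witness) does \emph{not} force a 3PC. A concrete counterexample is the wheel consisting of an $8$-cycle $a_1a_2a_3a_4a_5wbu$ together with a hub $v$ adjacent to $a_1,a_3,a_5,b$. Set $A'=\{a_1,\dots,a_5\}$, $B'=\{b\}$, $U'_1=\{u\}$, $U'_2=\{v\}$, $U'_3=\{w\}$; all your hypotheses hold, and the configuration is already minimal in your sense, yet the graph is triangle-free and (one checks directly) theta-free, hence 3PC-free. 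The paper makes the same point in passing: the induced subgraphs forced by a $K_{2,3}$ induced minor include ``broken wheels'' in addition to 3PC's, and \cref{f:best} exhibits a 3PC-free graph that even contains $K_{3,3}$ as an induced minor.

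The slip in your minimality analysis is the assertion that, for a path-shaped $A'$, the three attachments lie ``at its leaves''. Minimality only forces the attachments to $U'_1$ and $U'_3$ to sit at the two ends; the attachment to the middle set $U'_2$ may land on \emph{any} vertex of the path (interior vertices are needed for connectivity regardless). This is exactly the ``type path'' situation of \cref{l:type}, and it is the case your final paragraph does not cover: when both $A'$ and $B'$ are of type path there is no degree-$3$ vertex and no triangle to serve as an end of a 3PC. What rescues the argument in the paper is precisely the extra length in the grid paths that you threw away by lumping all interior branch sets into a single $U_k$. Keeping two layers of interior vertices separate shows that $G$ contains $K_{2,3}^*$ (every edge of $K_{2,3}$ subdivided once) as an induced minor; then, as in \cref{lem:subk23im}, the set playing the role of $A$ provably cannot be of type path, and \cref{l:OnePath} finishes. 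So your overall outline is salvageable, but only after reinstating the subdivision structure and invoking (or reproving) \cref{l:OnePath} to handle the type-path obstruction.
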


 Checking
that layered wheels (and their even-hole-free variant) contain no $(5\times 5)$-grid as an induced minor is then easy by \cref{th:lw} and \cref{th:ehf-lw}. 

\subsection*{Outline of the paper}

In \cref{sec:type}, we prove some technical lemmas about how a
connected induced subgraph of some graph $G$ sees the rest of $G$.
These lemmas are more or less  known already. We reprove them for the sake of completeness and because we could not find them with the precise
statement that we need.  Note that they are needed for all the other results.  In \cref{sec:55}, we prove \cref{l:55}. In \cref{sec:134}, we
prove \cref{th:134}.  In \cref{sec:33}, we prove \cref{th:k343PC}.  We conclude the paper by \cref{sec:open} that presents several open questions.

\subsection*{Notation}

Let $G$ be a graph and $X$ and $Y$ be disjoint sets of vertices of $G$.
We say that $X$ is \emph{complete} to $Y$ if every vertex of $X$
is adjacent to every vertex of $Y$, $X$ is \emph{anticomplete} to $Y$
if  no vertex of $X$ is adjacent to a vertex of $Y$.  We say that
$X$ \emph{sees} $Y$ if there exist $x\in X$ and $y \in Y$ such that
$xy\in E(G)$.  Note that the empty set is complete (and anticomplete)
to every set of vertices of $G$.  

By \emph{path} we mean a sequence of vertices $p_1 \dots p_k$ such
that for all $1\leq i<j \leq k$, $p_ip_j \in E(G)$ if and only if $j=i+1$.
Therefore, what we call \emph{path} for the sake of brevity
is sometimes referred to as \emph{chordless path} or \emph{induced path} in a more standard notation.
We use the notation $aPb$ to denote
the subpath of $P$ from $a$ to $b$ (possibly $a=b$ since a path may
consist of a single vertex).

When we deal with a theta with two vertices of degree~3 $u$ and $v$,
we say that the theta is \emph{from $u$ to $v$}.
We use similar terminology for prisms and pyramids that are, respectively, from a triangle to a triangle and from a vertex to a triangle.

\section{Types}
\label{sec:type}

\begin{figure}[!ht]
    \centering
    \resizebox{\textwidth}{!}{\includegraphics{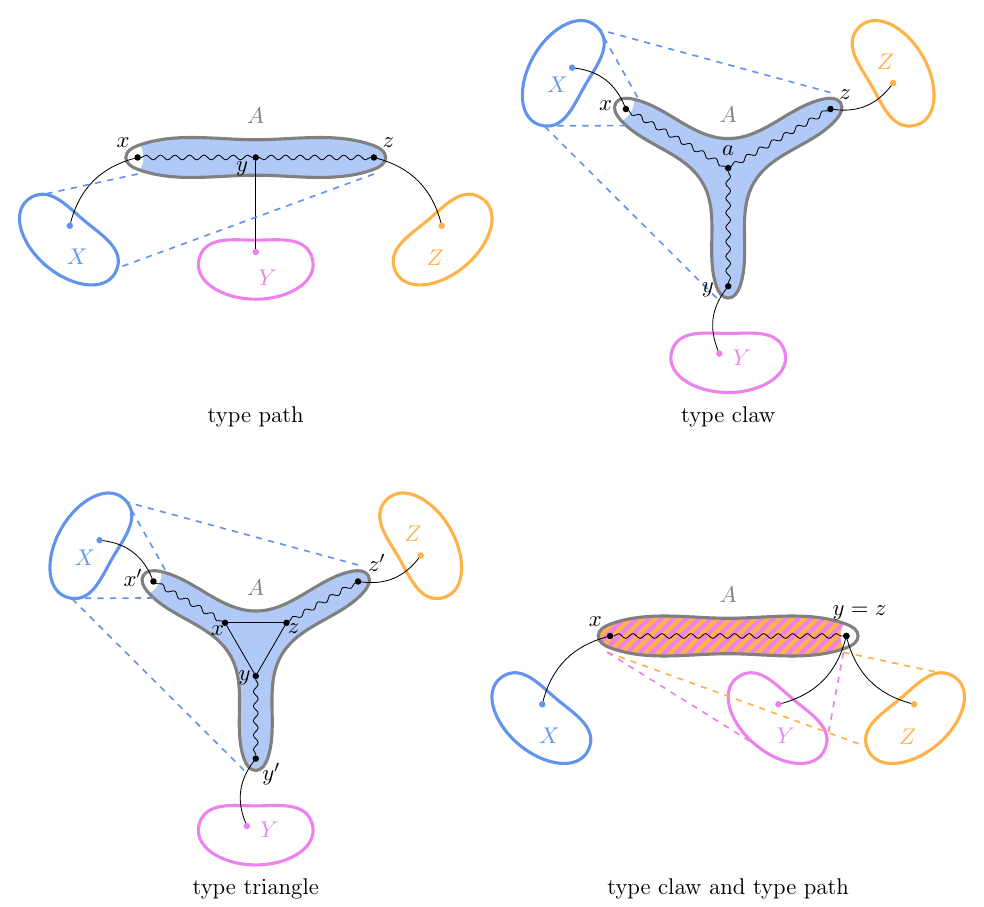}}
    \caption{We illustrate the three types and a degenerate case in which type path and type claw overlap.
    For the three types we by way of example illustrate how $X$ is not connected to anything in $A$ but $x$.
    For the degenerate it is depicted that $Y$ and $Z$ are not connected to anything in $A$ but the vertex $y=z$.}
    \label{fig:types}
\end{figure}

Let $A$, $X$, $Y$ and $Z$ be four disjoint sets of vertices in a
graph $G$.
We distinguish three different types the set $A$ can have, see \cref{fig:types} for an illustration.

We say that $A$ is of \emph{type path centered at $Y$ with respect to
$X$, $Y$ and $Z$} if there exists a path $P= x\dots z$ in $A$ such
that $x$ sees $X$, $z$ sees $Z$, $P$ sees $Y$, $P\sm x$ is
anticomplete to $X$ and $P\sm z$ is anticomplete to $Z$.

Similarly we define $A$ being of \emph{type path centered at $X$} and
being of \emph{type path centered at $Z$}.
We say that $A$ is of \emph{type path} with respect to $X$, $Y$ and $Z$ if it is of type
path centered at either $X$, $Y$ or $Z$.
Additionally, if $A$ is of type path centered at a set $W$, we call $W$ a \emph{center for $A$}.

Observe that when $A$ is of type path centered at $Y$ and a unique
vertex of $P$ sees $Y$, and moreover this vertex is $x$, then $A$ is
also centered at $X$. In particular, if $x=z$, then $A$ is centered at
$X$, $Y$ and $Z$.

We say that $A$ is of \emph{type claw} with respect to $X$, $Y$ and
$Z$ if there exists in $A$ a vertex $a$ and three paths
$P = a\dots x$, $Q= a\dots y$ and $R= a\dots z$ such that
$V(P) \cap V(Q) \cap V(R) = \{a\}$, $P\sm a$, $Q \sm a$ and $R \sm a$
are pairwise anticomplete, $x$ sees $X$, $y$ sees $Y$, $z$ sees $Z$,
$(P \cup Q \cup R) \sm x$ is anticomplete to~$X$,
$(P \cup Q \cup R) \sm y$ is anticomplete to $Y$ and
$(P \cup Q \cup R) \sm z$ is anticomplete to $Z$.

Observe that possibly $a=x$, $a=y$ or $a=z$ (in fact, possibly two or
three of these equalities hold).
Observe that if $a=x$, then $A$ is not only of type claw, but also of type path centered at $X$, this case is additionally depicted in \cref{fig:types}.

We say that $A$ is of \emph{type triangle} with respect to $X$, $Y$
and $Z$ if there exists in $A$ three vertex-disjoint paths
$P = x\dots x'$, $Q= y\dots y'$ and $R= z\dots z'$ such that the only
edges between $P$, $Q$ and $R$ are $xy$, $yz$ and $yz$, $x'$ sees $X$,
$y'$ sees $Y$, $z'$ sees $Z$, $(P \cup Q \cup R) \sm x$ is
anticomplete to~$X$, $(P \cup Q \cup R) \sm y$ is anticomplete to $Y$
and $(P \cup Q \cup R) \sm z$ is anticomplete to $Z$.

Observe that each of $P$, $Q$ and $R$ is possibly of length~0.

\begin{lemma}
  \label{l:type}
  Let $G$ be a graph and $A$, $X$, $Y$ and $Z$ be disjoint sets of
  vertices of $G$.  If $A$ is connected and $A$ sees $X$, $Y$ and $Z$,
  then $A$ is of type path, of type claw, or of type triangle with
  respect to $X$, $Y$ and $Z$.
\end{lemma}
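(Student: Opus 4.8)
The plan is to find, inside the connected set $A$, a minimal connected subset $B$ that still sees all three of $X$, $Y$ and $Z$, and then analyse the structure of $B$. First I would fix vertices $x_0, y_0, z_0 \in A$ with $x_0$ seeing $X$, $y_0$ seeing $Y$, $z_0$ seeing $Z$ (not necessarily distinct), and take $B \subseteq A$ minimal subject to being connected and containing a vertex seeing $X$, a vertex seeing $Y$, and a vertex seeing $Z$. By minimality, $B$ has at most three ``attachment'' vertices, in the sense that if $v \in B$ is not one of the chosen vertices seeing $X$, $Y$ or $Z$, then $B \setminus v$ is either disconnected or no longer sees all three sets; this forces $B$ to be a tree-like object — more precisely, I would argue $B$ is (the vertex set of) a tree in which every leaf is a vertex seeing one of $X$, $Y$, $Z$, so $B$ has at most three leaves. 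A connected graph whose spanning ``skeleton'' has at most three leaves is either a path or a subdivided claw (a ``spider'' with one branch vertex and three legs).

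The second step is to extract from $B$ the required induced path, claw, or triangle. If $B$'s skeleton is a path, then walking along it gives vertices $x, z$ at (or near) the ends and an internal vertex seeing $Y$; after trimming so that the endpoint seeing $X$ is the first vertex to see $X$ and symmetrically for $Z$, this yields exactly the ``type path centered at $Y$'' configuration (and the special cases where a single vertex, possibly an endpoint, sees $Y$ collapse to centered at $X$ or $Z$, matching the observation in the text). If $B$'s skeleton is a subdivided claw with centre $a$ and three legs $P, Q, R$ ending at vertices seeing $X$, $Y$, $Z$ respectively, then trimming each leg so its endpoint is the first vertex along it seeing the corresponding set gives three paths from $a$ meeting only at $a$, pairwise anticomplete off $a$ (by the minimality-forced absence of chords between distinct legs — a chord would let us shortcut and contradict minimality), which is precisely ``type claw''. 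The degenerate cases $a=x$, $a=y$ or $a=z$ are exactly those the statement allows.

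The one remaining subtlety is that the definitions of type path and type claw demand that the paths be \emph{chordless} and that the relevant subpaths be \emph{anticomplete} to $X$, $Y$, $Z$; both the chordlessness within a leg and the ``$P \setminus x$ anticomplete to $X$'' conditions follow from choosing $B$ minimal and then choosing, along each leg, the \emph{first} vertex that sees the target set — any violation gives a strictly smaller connected set still seeing all three, contradicting minimality. I expect the main obstacle to be bookkeeping in the claw case when two legs share their branch vertex with an endpoint, or when the ``type triangle'' case arises: type triangle does not come from $B$ being minimal in the naive sense, since a triangle $xyz$ with pendant paths is not a tree. To capture it I would instead not insist $B$ be acyclic but only that $B$ be \emph{edge-minimal} (or minimal as an induced subgraph) among connected subsets seeing all three sets; then $B$ either is a tree (giving path or claw) or contains exactly one cycle, and a short argument shows that cycle must be a triangle whose three vertices are the three branch points towards $X$, $Y$, $Z$ — delete any edge of a longer cycle and connectivity survives. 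Verifying that this exhausts all cases, and that no configuration with two independent cycles or a long cycle can be minimal, is the crux; everything else is routine trimming.
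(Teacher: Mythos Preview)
Your overall strategy---take a minimal connected $B\subseteq A$ still seeing all three sets and read off its shape---is natural and genuinely different from the paper's, but the handling of the triangle case has a real gap.

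The problem is a conflation of two kinds of minimality. If $B$ is \emph{edge-minimal} among connected subgraphs seeing $X,Y,Z$, then indeed ``delete any edge of a cycle and connectivity survives'' kills every cycle and $B$ is a tree; but then the paths of $B$ need not be chordless in $G$, so the anticompleteness clauses in the definitions of type path and type claw can fail---and you will never produce type triangle at all. If instead $B$ is \emph{vertex-minimal} as an induced subgraph (which is what you actually need, since the three types are stated for induced paths), then you may only delete vertices, and your ``delete any edge of a longer cycle'' argument does not apply. A vertex-minimal $B$ can contain a triangle (take $A=\{x,y,z\}$ inducing a triangle with $x,y,z$ seeing $X,Y,Z$ respectively), so the earlier claim that minimality forces a tree is false, as you partly recognise. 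The correct statement---that a vertex-minimal such $B$ has no cycle of length $\ge 4$---is true, but it requires a vertex-deletion argument: on a long cycle at most three vertices are ``responsible'' for $X,Y,Z$ (directly or via a pendant component), so some cycle vertex together with its pendant components can be dropped. That is essentially the argument of Lemma~\ref{lem:girthtree}, not the edge-deletion you invoke. Even granting this, you would still need to exclude multiple triangles and to verify the pairwise anticompleteness of the three legs by further minimality arguments; all of this is doable, but it is more than your sketch acknowledges.

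The paper bypasses this structural analysis entirely. It assumes $A$ is not of type path, takes a path $P=x\dots z$ in $A$ with $x$ seeing $X$ and $z$ seeing $Z$ and with $P\setminus x$, $P\setminus z$ anticomplete to $X$, $Z$ (so $P$ must be anticomplete to $Y$), and then a path $Q=y\dots y'$ from a vertex seeing $Y$ to a vertex seeing $P$, chosen so that $V(P)\cup V(Q)$ is minimal. A short argument shows $Q$ is anticomplete to $X$ and $Z$, and one then inspects the neighbours $a,a'$ of $y'$ on $P$: if $a=a'$ one reads off type claw, if $aa'\in E(G)$ one reads off type triangle, and if $a\neq a'$ with $aa'\notin E(G)$ a rerouting of $P$ through $y'$ contradicts minimality of $V(P)\cup V(Q)$. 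This direct two-path construction never needs to characterise a global minimal $B$.
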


\begin{proof}
  Suppose that $A$ is not of type path.  Let $P =x \dots z$ be a path
  in $A$ such that $x$ sees $X$, $z$ sees $Z$, $P\sm x$ is
  anticomplete to $X$ and $P\sm z$ is anticomplete to $Z$. Note that
  such a path exists (consider for instance a shortest path from the
  vertices that see $X$ to the vertices that see $Z$).  Since $A$ is not
  of type path centered at $Y$, $P$ is anticomplete to $Y$.  Let
  $Q = y\dots y'$ be path in $A$ disjoint from $P$ and such that $y$
  sees $Y$, $y'$ sees $P$, $Q \sm y$ is anticomplete to $Y$ and
  $Q\sm y'$ is anticomplete to $P$.  Note again that such a path
  exists (consider for instance a shortest path from the vertices that
  see $Y$ to the vertices that see $P$).  We suppose that $P$ and $Q$
  are chosen subject to the minimality of $V(P) \cup V(Q)$.

  Let $a$ be the neighbor of $y'$ in $P$ closest to $x$ along $P$ and
  $a'$ be the neighbor of $y'$ in $P$ closest to $z$ along $P$
  (note that $a=a'$ or  $aa'\in E(G)$ is possible).

  If $Q$ sees both $X$ and $Z$, then consider vertex $u$ in $Q$ such that $yQu$ sees both $X$ and $Z$, and choose $u$ closest to $y$ along $Q$. The path $yQu$ shows that $A$ is of type path (centered at $X$ or $Z$, possibly both, possibly also centered at $Y$ if $u=y$), a contradiction. Hence, we may assume up to symmetry that $Q$ is anticomplete to $Z$. 
  If $Q$ sees $X$, then the path $yQy'a'Pz$ shows that $A$ is of type
  path centered at $X$, a contradiction.  Hence, $Q$ is anticomplete
  to~$X$ and $Z$.

  If $a\neq a'$ and $aa'\notin E(G)$, then  consider the path $P' = x P a y' a' P z$.  If  $y=y'$, then because of $P'$, $A$ is of type path centered at $Y$, a  contradiction.  So, $y\neq y'$ and the paths $P'$ and $Q\sm y'$ contradict the minimality of $V(P) \cup V(Q)$.

  Hence $a=a'$ or $aa'\in E(G)$.  If $a=a'$, then the three paths
  $aPx$, $ay'Qy$ and $aPz$ show that $A$ is of type claw with respect
  to $X$, $Y$ and $Z$.  If $aa'\in E(G)$, then the three paths $aPx$,
  $Q$ and $a'Pz$ show that $A$ is of type triangle with respect to $X$,
  $Y$ and $Z$.
\end{proof}

The next lemma is implicitly about the presence of $K_{2, 3}$ as an
induced minor in a graph. In~\cite{DBLP:conf/iwoca/DallardDHMPT24},
it is proved along similar lines that a graph contains $K_{2, 3}$ as
an induced minor if and only if it contains some configuration from a
restricted list as an induced subgraph (the so-called thetas,
pyramids, long prisms and broken wheels,
see~\cite{DBLP:conf/iwoca/DallardDHMPT24} for the definitions).

\begin{lemma}
  \label{l:OnePath}
  Let $G$ be a 3PC-free graph and $A$, $B$, $X$, $Y$ and $Z$ be
  disjoint connected subsets of $V(G)$. If $X$, $Y$ and $Z$ are
  pairwise anticomplete, $A$ and $B$ are anticomplete to each other,
  and each of $A$ and $B$ sees each of $X$, $Y$ and $Z$, then $A$ and $B$ are
  of type path with respect to $X$, $Y$ and $Z$.
\end{lemma}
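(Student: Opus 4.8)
The plan is to argue by contradiction with $3$PC-freeness. By \cref{l:type} applied to $A$ (connected, seeing $X$, $Y$, $Z$), the set $A$ is of type path, type claw, or type triangle with respect to $X$, $Y$, $Z$, and likewise for $B$. Since the hypotheses are symmetric in $A$ and $B$, it suffices to show that $A$ cannot be of type claw and cannot be of type triangle. So assume it is one of these and fix a witness: an induced subgraph $\Pi_A$ of $G[A]$ that is either a center $a$ together with three otherwise-disjoint paths to vertices $x_A$, $y_A$, $z_A$ seeing $X$, $Y$, $Z$ respectively (claw case), or a triangle together with three disjoint paths leading from it to such $x_A$, $y_A$, $z_A$ (triangle case). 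Using the observation recorded after the definition of type claw, in the claw case we may assume $a\notin\{x_A,y_A,z_A\}$, since otherwise $A$ would also be of type path. In both cases the type definitions guarantee that $\Pi_A$ is anticomplete to $B$, that its only neighbours in $X$ are neighbours of $x_A$, its only neighbours in $Y$ are neighbours of $y_A$, and its only neighbours in $Z$ are neighbours of $z_A$.

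Next I would introduce a structure on the $B$-side and link the two sides through $X$, $Y$, $Z$. By \cref{l:type}, $B$ is of type path, claw, or triangle with respect to $X$, $Y$, $Z$; fix a witness $\Pi_B\subseteq G[B]$ with ends $x_B$, $y_B$, $z_B$ seeing $X$, $Y$, $Z$ respectively (in the type-path case two of these may coincide and one lies in the interior of the path). Since $X$ is connected and both $x_A$ and $x_B$ have a neighbour in $X$, there is a chordless path $S_X=x_A\dots x_B$ with interior in $X$; take it of minimum length, and define $S_Y$ through $Y$ and $S_Z$ through $Z$ the same way. Because $A$ is anticomplete to $B$, each of $S_X$, $S_Y$, $S_Z$ has length at least~$2$; because $X$, $Y$, $Z$ are pairwise anticomplete and disjoint from $A\cup B$, these three paths are pairwise internally disjoint with interiors anticomplete to each other and to $\Pi_A\cup\Pi_B$, and they meet $\Pi_A$ and $\Pi_B$ only at $x_A,y_A,z_A$ and $x_B,y_B,z_B$. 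I would then study the induced subgraph $H=G[V(\Pi_A)\cup V(\Pi_B)\cup V(S_X)\cup V(S_Y)\cup V(S_Z)]$.

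The last step is to check that $H$ is an induced $3$PC, contradicting the hypothesis. The graph $H$ consists of the $A$-side object $\Pi_A$, the $B$-side object $\Pi_B$, and three internally disjoint chordless paths, each obtained by concatenating a leg of $\Pi_A$, one of $S_X/S_Y/S_Z$, and a leg of $\Pi_B$; the absence of unwanted edges follows, in each combination of shapes, from the type definitions (which confine the attachments of $\Pi_A$ and $\Pi_B$ into $X\cup Y\cup Z$), from $X$, $Y$, $Z$ being pairwise anticomplete, from $A$ being anticomplete to $B$, and from the minimality of $S_X$, $S_Y$, $S_Z$. Depending on the shapes of $\Pi_A$ and $\Pi_B$ one gets a theta (claw--claw, or claw--path with the middle attachment of the path as the second branch vertex), a pyramid (claw--triangle, or triangle--path), or a prism (triangle--triangle); and the length conditions in the definitions of theta, prism and pyramid are met because each of the three paths of $H$ contains one of $S_X$, $S_Y$, $S_Z$ and so has length at least~$2$. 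I expect the only real obstacle to be this exhaustive but routine inducedness verification across the handful of shape combinations, together with the bookkeeping of the mild degeneracies (a leg of $\Pi_A$ or $\Pi_B$ of length~$0$, or two of $x_B,y_B,z_B$ coinciding); the hypothesis that $A$ and $B$ are anticomplete is precisely what keeps every composite path of length at least~$2$ and hence rules out the degenerate cases that would otherwise fail to be $3$PCs. This contradiction shows $A$, and by symmetry $B$, is of type path.
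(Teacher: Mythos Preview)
Your plan is the paper's: classify $A$ and $B$ via \cref{l:type}, assume $A$ is of type claw or triangle, link the two witnesses by shortest paths through $X$, $Y$, $Z$, and exhibit a 3PC. The one place you underestimate is when $B$ is of type path, say centered at~$Y$. The type-path definition only says that the witness path $P'=x_B\dots z_B$ \emph{sees}~$Y$; it does not pin the $Y$-attachment of $P'$ to a single vertex $y_B$, so your assertion that ``the type definitions confine the attachments of $\Pi_B$'' is false here, and the interior of $S_Y$ (which lies in $Y$) may have unwanted edges to $P'$. In particular, claw--path need not give a theta and triangle--path need not give a pyramid, contrary to what you write.

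The paper handles this by first disposing of the cases where $B$ is of type claw or of type triangle (so that henceforth $B$ is of type path \emph{and not of type claw}), then taking a shortest path in $Y$ from a neighbour of $y_A$ to the first vertex $b'\in Y$ that sees $P'$, and finally splitting on how $b'$ attaches to $P'$: a unique neighbour forces $B$ to be of type claw (already excluded), two adjacent neighbours give a pyramid, and two non-adjacent neighbours give a theta (in the claw--path case; the triangle--path case is analogous with prism/pyramid). With this extra case analysis in place of your ``routine verification'', your argument becomes exactly the paper's.
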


\begin{proof}
  Suppose for a contradiction that $A$ (the argument for $B$ is the same by symmetry) is not of type path with
  respect to $X$, $Y$ and $Z$.  Hence, by Lemma~\ref{l:type}, $A$ we
  may consider the two cases below.

  \noindent{\textbf{Case 1:}} $A$ is of type claw (and not of type path).  So, there
  exists a vertex $a\in A$ and three paths $P = a\dots x$,
  $Q = a\dots y$ and $R= a\dots z$ like in the definition of type
  claw.  Moreover, $a\neq x$, $a\neq y$ and $a\neq z$ for otherwise,
  $A$ would be of type path.

  If $B$ is of type claw, then there exist a vertex $b\in B$ and
  three paths $P' = b\dots x'$, $Q' = b\dots y'$ and $R'= b\dots z'$
  like in the definition of type claw. Consider a shortest path $P''$
  from $x$ to $x'$ with interior in $X$, and let $Q'' = y\dots y'$ and
  $R'' = z \dots z'$ be defined similarly through $Y$ and $Z$.  The
  nine paths $P$, $Q$, $R$, $P'$, $Q'$, $R'$, $P''$, $Q''$ and $R''$
  form a theta from $a$ to $b$, a contradiction.

  If $B$ is of type triangle, a similar contradiction is found because
  of the existence of a pyramid in $G$. 

  So $B$ is not of type claw or triangle.  By Lemma~\ref{l:type}, $B$
  is of type path, and up to symmetry we suppose that it is centered
  at $Y$.  Let $P' = x' \dots z'$ be a path like in the definition of
  type path. Consider a shortest path $P''$ from $x$ to $x'$ with
  interior in $X$, and let $R'' = z \dots z'$ be defined similarly
  through $Z$.  Let $Q''=b\dots b'$ be a shortest path in $Y$ such
  that $by\in E(G)$ and $b'$ sees~$P'$.  Let $a'$ be the neighbor of
  $b'$ in $P'$ closest to $x'$ along $P'$. Let $a''$ be the neighbor
  of $b'$ in $P'$ closest to $z'$ along $P'$.  If $a'=a''$, then $B$
  is of type claw, a contradiction.  If $a'a''\in E(G)$, then the
  seven paths $P$, $Q$, $R$, $P'$, $P''$, $Q''$ and $R''$ form a
  pyramid from $a$ to $b'a'a''$, a contradiction.  Hence $a\neq a'$
  and $aa'\notin E(G)$.  So the paths $P$, $Q$, $R$, $x'P'a'$,
  $a''P'z'$, $P''$, $Q''$ and $R''$ form a theta from $a$ to $b'$, a
  contradiction.

  \noindent{\textbf{Case 2:}} $A$ is of type triangle.

  The proof is almost the same as in Case~1, so we just sketch it.  If
  $B$ is of type claw, then $G$ contains a pyramid. If $B$ is of type
  triangle, then $G$ contains a prism. And if $B$ is of type path,
  then $G$ contains a prism or a pyramid.   
\end{proof}

Let $G$ and $H$ be two graphs. An \emph{induced minor model} of $H$ in
$G$ is a collection of pairwise disjoint sets $\{ X_v\}_{v\in V(H)},$
called the \emph{branch sets} of the model, such that

\begin{itemize}
\item $X_v\subseteq V(G)$ for all $v\in V(H),$
\item $X_v$ induces a connected subgraph of $G$ for every $v\in V(H),$
  and
\item $X_u$ sees $X_v$ (in $G$) if and only if $uv\in E(H)$.
\end{itemize}

It is well known and easy to check that $G$ contains a graph
isomorphic to $H$ as an induced minor if and only if there exists an
induced minor model of $H$ in $G$.  We identify an induced minor model
$\{ X_v\}_{v\in V(H)}$ of $H$ in $G$ with the graph
$H'\coloneqq G[\bigcup_{v\in V(H)}X_v]$.  Please note that the
definition of the branch sets is not uniquely determined by $H'.$

An induced minor model $H'\subseteq G$ of $H$ is \emph{minimal} if
$H' \sm a$ does not contain an induced minor isomorphic to $H$ for all
$a\in V(H').$

We denote by $K^{*}_{2,3}$ the graph obtained from $K_{2,3}$ by
subdividing every edge once, i.e., $K^{*}_{2,3}$ is the graph with two
degree-3 vertices and three paths of length four between them.

\begin{lemma}
\label{lem:subk23im}
If a  graph contains $K^{*}_{2,3}$ as an induced minor,
then it contains a 3PC.
\end{lemma}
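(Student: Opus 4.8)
The plan is to reduce the statement of \cref{lem:subk23im} directly to \cref{l:OnePath} by reading off the correct five connected sets from a given induced minor model of $K^{*}_{2,3}$. Suppose $G$ contains $K^{*}_{2,3}$ as an induced minor and, for contradiction, that $G$ is 3PC-free. Recall that $K^{*}_{2,3}$ has two degree-$3$ vertices $u, v$ and three internally disjoint $u$--$v$ paths each of length four; write the internal vertices of the $i$-th path as $p_i q_i r_i$ (so the path is $u\,p_i\,q_i\,r_i\,v$) for $i \in \{1,2,3\}$. Take an induced minor model of $K^{*}_{2,3}$ in $G$ and let $A, B$ be the branch sets of $u, v$, and for each $i$ let $P_i, Q_i, R_i$ be the branch sets of $p_i, q_i, r_i$. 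Now set $X = Q_1$, $Y = Q_2$, $Z = Q_3$: these three sets are connected and pairwise anticomplete in $G$, since $q_1, q_2, q_3$ are pairwise nonadjacent in $K^{*}_{2,3}$.

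Next I would argue that, after possibly enlarging $A$ and $B$, the sets $A$ and $B$ each see all three of $X, Y, Z$ while remaining anticomplete to each other. The natural choice is to absorb the ``first leg'' branch sets into $A$ and the ``last leg'' branch sets into $B$: replace $A$ by $A' = A \cup P_1 \cup P_2 \cup P_3$ and $B$ by $B' = B \cup R_1 \cup R_2 \cup R_3$. Since in $K^{*}_{2,3}$ the vertex $u$ is adjacent to each $p_i$, and the $p_i$ are pairwise nonadjacent, $A'$ induces a connected subgraph of $G$; symmetrically $B'$ is connected. Because $q_i p_i \in E(K^{*}_{2,3})$, the set $A'$ sees $Q_i = $ one of $X, Y, Z$ (via $P_i$), for each $i$; likewise $B'$ sees each of $X, Y, Z$ via the $R_i$. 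Finally $A'$ and $B'$ are anticomplete to one another: the only possible edges would be between $A$ and $B$ (excluded, as $uv \notin E(K^{*}_{2,3})$), between $P_i$ and $R_j$ (excluded, as $p_i r_j \notin E(K^{*}_{2,3})$ for all $i,j$), between $A$ and $R_j$ (excluded, as $u r_j \notin E(K^{*}_{2,3})$), or between $P_i$ and $B$ (excluded, as $p_i v \notin E(K^{*}_{2,3})$). Thus $A', B', X, Y, Z$ satisfy all the hypotheses of \cref{l:OnePath}.

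Applying \cref{l:OnePath} to $A', B', X, Y, Z$ in the 3PC-free graph $G$, we conclude that each of $A'$ and $B'$ is of type path with respect to $X, Y, Z$. This is where the payoff comes: a type-path structure for $A'$ gives an induced path through $A'$ from a vertex seeing $X$ to a vertex seeing $Z$ that also sees $Y$, with the appropriate anticompleteness conditions; the same holds inside $B'$. I would then glue these two paths together with short paths through $X$, $Y$, and $Z$ (one picks, e.g., a shortest $X$-path joining the endpoint of the $A'$-path that sees $X$ to the endpoint of the $B'$-path that sees $X$, and similarly through $Y$ and $Z$). A careful bookkeeping of endpoints and of the anticompleteness clauses in the definition of type path shows that the resulting subgraph is exactly a theta, a prism, or a pyramid — that is, a 3PC — contradicting the assumption that $G$ is 3PC-free.

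The only real obstacle is the last gluing step: one must check that no unwanted chords appear between the three connecting paths and the two type-path structures, and that degenerate cases (the $A'$-path being a single vertex, the center vertex coinciding with an endpoint, the two endpoints of a leg coinciding, etc.) still yield an honest 3PC rather than something smaller. This is essentially the same case analysis already carried out in the proof of \cref{l:OnePath} when it handles a type-path-versus-type-path configuration, so I expect it to go through with minor adaptations; in fact the cleanest route may be to not re-prove anything but to note that $A', B', X, Y, Z$ already witness precisely the ``both of type path'' conclusion of \cref{l:OnePath}, and then invoke the internal argument of that lemma (or a short direct holes-argument using the characterisation ``$G$ is a 3PC iff it is the union of three internally disjoint paths pairwise inducing holes'') to extract the 3PC. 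Everything else is routine verification that the branch-set adjacencies mirror the adjacencies of $K^{*}_{2,3}$.
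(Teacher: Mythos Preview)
Your setup with $A' = A \cup P_1 \cup P_2 \cup P_3$, $B' = B \cup R_1 \cup R_2 \cup R_3$ and $X,Y,Z = Q_1,Q_2,Q_3$ is fine, and \cref{l:OnePath} does apply. The gap is in what you do next. The conclusion of \cref{l:OnePath} is precisely that in a 3PC-free graph both $A'$ and $B'$ \emph{are} of type path; this is not a contradiction, and the proof of \cref{l:OnePath} never analyses a ``type-path versus type-path'' configuration --- its argument begins by assuming one side is \emph{not} of type path. So there is no internal case analysis for you to borrow. Worse, two type-path sets together with $X,Y,Z$ need not produce a 3PC at all: this is exactly the content of \cref{l:exact1} (and \cref{f:situations}), which describes the several 3PC-free ways a $K_{3,3}$-pattern can sit in $G$. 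Your ``gluing'' step would have to rule all of those out, and nothing you have written does so; the specific $K^{*}_{2,3}$ provenance of $A'$ has been discarded once you pass to the single type-path inside it.

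The paper sidesteps the whole gluing issue by choosing an \emph{asymmetric} partition: it takes a \emph{minimal} model, sets $X,Y,Z$ to be the three branch sets adjacent to $b$, sets $B = X_b$, and pushes \emph{two} layers of internal branch sets (both $X_{p_1},X_{p_2}$, etc.) into $A$. Minimality forces each degree-$2$ branch set to be a path with a single attachment at each end, so each of the three ``arms'' $X_{p_1}\cup X_{p_2}$, $X_{q_1}\cup X_{q_2}$, $X_{r_1}\cup X_{r_2}$ is an induced path of length at least one whose far endpoint is the \emph{only} vertex of $A$ seeing the corresponding $X,Y,Z$ and has degree~$1$ in $G[A]$. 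Hence no induced path in $A$ can contain all three tips, so $A$ is not of type path --- an immediate contradiction with \cref{l:OnePath}. Your one-leg absorption does not achieve this: if some $P_i$ is a single vertex with several neighbours in $A$ (which neither minimality nor anything else excludes), then $A'$ can genuinely be of type path, and you are stuck with the gluing problem. The fix is not to glue, but to make $A$ rigid enough that it cannot be of type path in the first place; that is what absorbing two legs and invoking minimality buys.
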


\begin{proof}
  Suppose for a contradiction that a 3PC-free graph $G$ contains
  $H = K^{*}_{2,3}$ as an induced minor.  Denote by $a$ and $b$ the
  degree-3 vertices and let $ap_1p_2p_3b$, $aq_1q_2q_3b$ and
  $ar_1r_2r_3b$ be the three paths of $H$.  Consider a minimal induced
  minor model $\{X_v\}_{v \in V(H)}$ of $H$ in $G$. For all
  $v\in V(H) \sm \{a, b\}$, $v$ has two neighbors $u$ and $w$ in $H$
  and by minimality, $X_v$ is a path $P= v'\dots v''$ such that $v'$
  sees $X_u$, $v''$ sees $X_w$, $P\sm v'$ is anticomplete to $X_u$ and
  $P\sm v''$ is anticomplete to $X_w$.  It follows that if we set
  $A= X_a \cup X_{p_1} \cup X_{p_2} \cup X_{q_1} \cup X_{q_2} \cup
  X_{r_1} \cup X_{r_2}$, $X = X_{p_3}$, $Y = X_{q_3}$, $Z = X_{r_3}$
  and $B= X_b$, $A$ cannot be of type path with respect to $X$, $Y$
  and $Z$.  Indeed, since $X_{p_1}\cup X_{p_2}$, $X_{q_1}\cup X_{q_2}$
  and $X_{r_1}\cup X_{r_2}$ all induce paths of length at least~1 in
  $G$, $A$ cannot contain a path with vertices seeing $X$, $Y$ and
  $Z$.  Hence, $A$, $B$, $X$, $Y$ and $Z$ contradict \cref{l:OnePath}.
\end{proof}

\section{Proof of \cref{l:55}}
\label{sec:55}

We here prove \cref{l:55} that we restate. 

\lfivefive*

\begin{proof}
    If a graph contains a $(5\times 5)$-grid as an induced minor, then it contains $K_{2, 3}^*$ as an induced minor, because the $(5 \times 5)$-grid contains $K_{2, 3}^*$ as an induced subgraph.  The result therefore follows from \cref{lem:subk23im}. 
\end{proof}

\section{Proof of \cref{th:134}}
\label{sec:134}

\begin{lemma}
\label{lem:girthtree}
Let $G$ and $H$ be graphs and $\{X_v\}_{v \in V(H)}$ a minimal induced
minor model of $H$ in $G$.  For every $v \in V(H)$, the graph $G[X_v]$
does not contain cycles longer than the degree of $v$.
\end{lemma}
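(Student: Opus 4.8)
The plan is to obtain a contradiction with minimality by deleting a single well-chosen vertex from $X_v$. Fix $v\in V(H)$, put $d\coloneqq \deg_H(v)$ and $J\coloneqq G[X_v]$, and suppose for a contradiction that $J$ contains a cycle $C$ with $|V(C)|\ge d+1$. For each neighbour $u$ of $v$ in $H$ the set $X_v$ sees $X_u$, so the \emph{interface} $T_u\coloneqq\{a\in X_v : a \text{ has a neighbour in } X_u\}$ is nonempty; call $a\in X_v$ \emph{critical} if $T_u=\{a\}$ for some neighbour $u$ of $v$. There are at most $d$ critical vertices. The first (easy) step records that if some vertex $a\in X_v$ is neither critical nor a cut vertex of $J$, then $X_v\sm a$ is nonempty, connected, a proper subset of $X_v$, and still sees every $X_u$ with $uv\in E(H)$ (while of course not seeing any $X_u$ with $uv\notin E(H)$, since $X_v\sm a\subseteq X_v$); hence replacing $X_v$ by $X_v\sm a$ and leaving every other branch set untouched yields an induced minor model of $H$ inside $H'\sm a$, contradicting minimality. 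So it is enough to exhibit a non-cut vertex of $J$ that is not critical.

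Since at most $d$ vertices are critical, this follows once we know that $J$ has at least $d+1$ non-cut vertices, and for that I plan to prove the purely structural fact that \emph{any connected graph containing a cycle of length $\ell$ has at least $\ell$ vertices that are not cut vertices}. The argument goes through the block-cut tree of $J$: the cycle $C$ lies inside a single block $B$, which is therefore $2$-connected and has $|V(B)|\ge|V(C)|=\ell$ vertices. Root the block-cut tree at $B$. The vertices of $B$ that are cut vertices of $J$ are exactly those through which further blocks are attached; all remaining vertices of $B$ are non-cut vertices of $J$. For each cut vertex $p$ of $J$ lying in $B$, the part of $J$ hanging below $p$ in the block-cut tree contains a leaf block, and any leaf block has at least one vertex which is not a cut vertex of $J$ and lies outside $B$; these charged vertices are pairwise distinct for distinct $p$ and distinct from the non-cut vertices lying inside $B$. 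Adding up, $J$ has at least $|V(B)|\ge \ell\ge d+1$ non-cut vertices, which is what we needed; combined with the previous paragraph this contradicts minimality.

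I expect the only real work to lie in this structural lemma — specifically the block-cut tree bookkeeping that the non-cut vertices ``charged'' to the cut vertices of $B$ are genuinely distinct from one another and from those inside $B$ (this uses that distinct cut vertices of $B$ fall into distinct components of the block-cut tree minus $B$, and that a non-cut vertex of $J$ lies in exactly one block). The minimality half is essentially immediate once the lemma is available. Two minor points still need attention: the degenerate case $|X_v|\le 1$, where $J$ has no cycle and there is nothing to prove, and a routine verification that the modified family of branch sets indeed satisfies all three defining conditions of an induced minor model of $H$ in $H'\sm a$.
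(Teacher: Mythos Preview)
Your proposal is correct, but it takes a genuinely different route from the paper. The paper marks, for each neighbour $u$ of $v$, one arbitrary vertex of $X_v$ adjacent to $X_u$; it then argues directly on the cycle $C$: a vertex $w\in C$ is called \emph{necessary} if it is marked or is the sole attachment in $C$ of some component of $G[X_v]\sm C$ containing a marked vertex, observes that at most $d$ vertices of $C$ are necessary, picks a non-necessary $w$, and deletes $w$ together with all components of $G[X_v]\sm C$ hanging only at $w$. No block--cut tree enters.

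Your argument instead isolates the purely graph-theoretic statement that a connected graph containing an $\ell$-cycle has at least $\ell$ non-cut vertices, proved via the block--cut tree, and then removes a single vertex (non-cut and non-critical). This is a bit more structural: you obtain a reusable lemma and only ever delete one vertex, at the cost of invoking the block decomposition. The paper's proof is more hands-on and self-contained but may delete several vertices at once. The bookkeeping you flag (that the charged non-cut vertices below distinct cut vertices of $B$ are pairwise distinct and lie outside $B$) is fine: distinct cut vertices of $B$ root disjoint subtrees of the block--cut tree, leaf blocks in those subtrees are distinct and different from $B$, and a non-cut vertex lies in a unique block.
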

\begin{proof}
  First, for every neighbor $u$ of $v$ in $H$, let us mark a single
  vertex in $X_v$ that is adjacent to a vertex in $X_u$.  If
  there exists a connected induced proper subgraph of $G[X_v]$ that
  contains all of the marked vertices, we contradict the minimality of
  the induced minor model.

Suppose that $G[X_v]$ contains a cycle of length longer than the
degree of $v$, and let $C \subseteq X_v$ be the vertices of this
cycle.  We say that a vertex $w \in C$ is \emph{necessary} if either
$w$ is marked or there exists a connected component $Y$ of
$G[X_v \setminus C]$ that contains a marked vertex and whose only
neighbor in $C$ is the vertex $w$.  Because $|C|$ is larger than the
number of marked vertices, there exists a vertex in $C$ that is not
necessary, let $w \in C$ be such a vertex.  We remove from $X_v$ the
vertex $w$ and every component of $G[X_v \setminus C]$ whose only
neighbor in $C$ is $w$.  All of the remaining vertices in $X_v$ are
still connected to $C \setminus \{w\}$ and contain all of the marked
vertices, so we get a connected induced proper subgraph of $G[X_v]$
that contains all of the marked vertices, which is a contradiction.
\end{proof}

\Cref{lem:girthtree} is useful for asserting that $G[X_v]$ is a
tree, which in turn is useful for obtaining degree-1 vertices in $G[X_v]$.
We make use of degree-1 vertices with the following lemma.

\begin{lemma}
\label{lem:deg1inbranch}
Let $\{X_v\}_{v \in V(H)}$ be a minimal induced minor model of a graph
$H$ in a graph $G$, and let $v \in V(H)$.  For every vertex
$w \in X_v$ whose degree is one in $G[X_v]$, there exists
$u \in V(H) \setminus \{v\}$ so that $w$ is the only neighbor of $X_u$
in $X_v$.
\end{lemma}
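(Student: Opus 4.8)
The plan is to argue by contradiction from the minimality of the model, in the same spirit as the proof of \cref{lem:girthtree}. Fix $v \in V(H)$ and a vertex $w \in X_v$ with $\deg_{G[X_v]}(w) = 1$, and let $w'$ be the unique neighbor of $w$ in $G[X_v]$. I would suppose, for a contradiction, that the conclusion fails: for every $u \in V(H) \sm \{v\}$, the vertex $w$ is \emph{not} the only neighbor of $X_u$ in $X_v$. The goal is then to delete $w$ from its branch set and show that the resulting family is still an induced minor model of $H$, contradicting minimality at the vertex $w$ of $H' \coloneqq G[\bigcup_{u \in V(H)} X_u]$.

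Concretely, I would set $X_v' \coloneqq X_v \sm \{w\}$ and leave $X_u$ unchanged for all $u \neq v$. Since $w$ has degree one in $G[X_v]$ we have $|X_v| \geq 2$, so $X_v'$ is nonempty; and deleting the leaf $w$ from the connected graph $G[X_v]$ leaves $G[X_v']$ connected. The branch sets remain pairwise disjoint, and deleting a vertex cannot create a new adjacency between two distinct branch sets, so it only remains to check that every edge of $H$ is still witnessed. The only branch set that changed is $X_v'$, so it suffices to verify that $X_v'$ sees $X_u$ for each neighbor $u$ of $v$ in $H$. Fix such a $u$: since $\{X_w\}_{w\in V(H)}$ is a model, $X_u$ sees $X_v$, and by the contradiction hypothesis $w$ is not the only neighbor of $X_u$ in $X_v$, so $X_u$ has a neighbor in $X_v \sm \{w\} = X_v'$. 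Hence $X_v'$ sees $X_u$, as needed.

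Putting this together, $\{X_v'\} \cup \{X_u\}_{u \neq v}$ is an induced minor model of $H$ all of whose branch sets avoid $w$, hence it is contained in $H' \sm w$; so $H' \sm w$ contains an induced minor isomorphic to $H$, contradicting the minimality of the model. The argument is essentially routine bookkeeping; the only point that needs genuine care is the verification in the previous paragraph that all three defining properties of an induced minor model survive the deletion of $w$ — in particular that connectivity of $X_v'$ is preserved, which uses precisely that $w$ has degree one in $G[X_v]$ rather than being an arbitrary vertex, and that the edges of $H$ incident to $v$ are still realized, which is exactly what the contradiction hypothesis rules out being violated.
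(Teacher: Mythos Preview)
Your proof is correct and is exactly the argument the paper gives, just spelled out in full detail; the paper's proof is the single sentence ``Otherwise, we could remove $w$ from $X_v$ and contradict the minimality of $\{X_v\}_{v \in V(H)}$.'' One cosmetic point: you reuse $w$ as the index in $\{X_w\}_{w\in V(H)}$ while $w$ is already the fixed leaf vertex, which is a harmless but avoidable notational clash.
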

\begin{proof}
  Otherwise, we could remove $w$ from $X_v$ and contradict the
  minimality of $\{X_v\}_{v \in V(H)}$.
\end{proof}

We say that such a branch set $X_u$ is \emph{private} to the vertex $w$.
We may now prove \cref{th:134} which we restate below. 

\short*

\begin{proof}
  Let $p = 12$ and $t = 2 \binom{p}{2}+2 = 134$.  Let $G$ be a graph
  with girth at least $p+1$, and let $\{X_v\}_{v \in V(K_{t,p})}$ be a
  minimal induced minor model of $K_{t,p}$ in $G$.  We denote the
  branch sets corresponding to vertices of $K_{t,p}$ on the side with
  $t$ vertices by $A_1, \ldots, A_t$ and on the other side by
  $B_1, \ldots, B_p$.  Note that $G$ is triangle-free.

  First suppose that there are two branch sets $A_i$, $A_j$ of size
  $|A_i|, |A_j| \le 2$.  In this case, there must be a vertex
  $v \in A_i$ that is adjacent to at least $p/2$ different branch sets
  on the other side, and a vertex $u \in A_j$ that is adjacent to at
  least $p/4 = 3$ different branch sets on the other side that $v$ is
  also adjacent to.  Fix three different branch sets $B_a$, $B_b$,
  $B_c$ that both $v$ and $u$ are adjacent to.  Now, by selecting in
  each branch set $B_a$, $B_b$, $B_c$ a shortest path from a neighbor
  of $v$ to a neighbor of $u$, we obtain a theta from $u$ to $v$.

  We may therefore assume that at least $t-1$ of the branch sets
  $A_1, \ldots, A_t$ contain at least three vertices.
  By \cref{lem:girthtree} and because $G$ has girth at least $p+1$, for all branch sets $A_1, \ldots, A_t$ the
  induced subgraph $G[A_i]$ is a tree, and for all such sets with
  $|A_i| \ge 3$, the tree must have two non-adjacent leaves $v$ and
  $u$.  By \cref{lem:deg1inbranch}, both $v$ and $u$ have a private
  branch set on the other side, so we can label each branch set
  $A_i$ with $|A_i| \ge 3$ with an unordered pair $\{B_j, B_k\}$ so that $B_j$ is private
  to $v$ and $B_k$ is private to~$u$. 
  
  Now, because
  $t-1 = 2 \binom{p}{2}+1$, there must exist three branch sets $A_a$,
  $A_b$, and $A_c$ that are labeled with the same pair $\{B_j, B_k\}$.
  By contracting both $B_j$ and $B_k$ into a single vertex and taking
  the paths in $A_a$, $A_b$, and $A_c$ between the corresponding
  leaves, we obtain $K^{*}_{2,3}$ as an induced minor.  By
  \cref{lem:subk23im}, $G$ must contain a theta since the theta is the
  only triangle-free 3PC.
\end{proof}

\section{Proof of \cref{th:k343PC}}
\label{sec:33}

We start with an improvement of \cref{l:OnePath}.

\begin{lemma}
  \label{l:allPath}
  Let $G$ be a 3PC-free graph and $k\geq 2$ be an
  integer. Suppose that $X$, $Y$, $Z$ and $A_1$, \dots, $A_k$ are
  disjoint connected subsets of $V(G)$, $X$, $Y$ and $Z$ are pairwise
  anticomplete, $A_1$, \dots, $A_k$ are pairwise
  anticomplete, and for every $i\in \{1, \dots, k\}$, $A_i$ sees
  $X$, $Y$ and $Z$.

  Then the $A_i$'s are all of type path with respect to $X$, $Y$ and $Z$
  and furthermore, one of $X$, $Y$ and $Z$ is a center for all of
  them.
\end{lemma}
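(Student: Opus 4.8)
The plan is to bootstrap from Lemma \ref{l:OnePath}, which already gives that each $A_i$ is of type path. The new content is that a single one of $X$, $Y$, $Z$ serves as a center for all of them simultaneously. First I would dispose of trivial overlaps: if some $A_i$ has the property that its defining path $P_i = x_i \dots z_i$ has $x_i = z_i$, then by the observation after the definition of type path, $A_i$ is centered at all three of $X$, $Y$, $Z$, so it imposes no constraint and we may ignore it when choosing the common center. So assume every $A_i$ has $x_i \neq z_i$.

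The core of the argument is a pairwise statement: for any two indices $i \neq j$, the sets $A_i$ and $A_j$ share a common center. Once this is established, I would finish by a short combinatorial/consistency argument. Think of it this way: for each $A_i$ let $S_i \subseteq \{X, Y, Z\}$ be its (nonempty) set of centers. Pairwise we will have shown $S_i \cap S_j \neq \emptyset$; I want $\bigcap_i S_i \neq \emptyset$. Since the ground set has only three elements, a family of nonempty subsets of a 3-element set that is pairwise intersecting need not have a common element in general (e.g.\ $\{X,Y\}, \{Y,Z\}, \{X,Z\}$), so the pairwise statement alone is not formally enough — I will need the pairwise argument to actually produce, for each pair, a 3PC unless a \emph{specific} one of the three sets is a center, and then argue that the "bad" configuration $\{X,Y\},\{Y,Z\},\{Z,X\}$ itself yields a 3PC. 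Concretely: suppose for contradiction no single set centers all $A_i$. Then for each $W \in \{X,Y,Z\}$ there is an index $i_W$ with $W \notin S_{i_W}$. If two of these indices coincide, say $i_X = i_Y = i$, then $S_i \subseteq \{Z\}$, so $A_i$ is centered only at $Z$; pick any other $A_j$ — since we may then force $Z \in S_j$ for every $j$ by the pairwise argument applied with $A_i$, contradiction. So $i_X, i_Y, i_Z$ are three distinct indices whose center-sets are contained in $\{Y,Z\}$, $\{X,Z\}$, $\{X,Y\}$ respectively; I would then show directly that these three type-path structures, together with $X$, $Y$, $Z$, build a 3PC (a theta, prism, or pyramid depending on how the centered vertices sit on the three paths), which is the desired contradiction.

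So the main work — and the main obstacle — is the pairwise step: given $A_i$, $A_j$ both of type path with $A_i$ \emph{not} centered at (say) $X$ and $A_j$ \emph{not} centered at $X$, derive a 3PC. Here is how I would set it up. Since $A_i$ is not centered at $X$, its path $P_i = x_i \dots z_i$ has $X$ seen only at $x_i$, and moreover $A_i$ is centered at $Y$ or $Z$, i.e.\ some internal-or-endpoint vertex of $P_i$ sees $Y$ or $Z$. I would take shortest connecting paths through $X$ (between a neighbor of $x_i$ in $X$ and a neighbor of $x_j$ in $X$), through $Y$, and through $Z$; together with $P_i$ and $P_j$ these almost form a theta from a vertex in $X$'s connector side to one on the other — the obstruction to it being an induced 3PC is exactly extra adjacencies, i.e.\ a vertex of $P_i$ or $P_j$ seeing $Y$ or $Z$ in the "wrong" place, or a vertex of $X$ seeing $Y$ or $Z$. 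Because $X$, $Y$, $Z$ are pairwise anticomplete, the interiors of the $X$-, $Y$-, $Z$-connectors are mutually anticomplete, so the only freedom is where $P_i$ and $P_j$ attach to $Y$ and $Z$. A careful case analysis on these attachment points — exactly mirroring the analysis already carried out in the proof of Lemma \ref{l:type} and Lemma \ref{l:OnePath}, where $a = a'$ gives a claw/pyramid, $aa' \in E(G)$ gives a triangle/prism, and $a \neq a'$ non-adjacent gives a theta — will in every case produce a 3PC. I expect this case analysis to be the longest part; the bookkeeping of which vertices are endpoints versus interior, and of the degenerate cases where connector paths have length $0$ or where $x_i$ itself sees $Y$ or $Z$, will be where the technical care is needed, but no case should be genuinely hard given the machinery of Section \ref{sec:type}.
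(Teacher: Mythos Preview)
Your plan has the right architecture, but the pairwise step is mis-stated and, as written, false. You propose to show that if \emph{neither} $A_i$ \emph{nor} $A_j$ is centered at $X$ then $G$ contains a 3PC; but this is exactly what the lemma permits: all the $A_i$'s may be centered at $Y$ only, hence none at $X$, with $G$ still 3PC-free. What you actually use earlier (in the ``$i_X=i_Y$'' reduction) is the correct pairwise claim $S_i\cap S_j\neq\emptyset$, whose negation is that $A_i$ and $A_j$ miss \emph{different} sets --- say $X\in S_i\setminus S_j$ and $Y\in S_j\setminus S_i$. That asymmetric configuration is the one you must analyse, not the symmetric one you describe, and your sketch (``$A_i$ not centered at $X$ means $X$ is seen only at $x_i$'') is set up for the wrong case.

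Even with the pairwise step corrected, you would still owe the separate ``bad triple'' argument for center-sets $\{X,Y\},\{Y,Z\},\{Z,X\}$, for which you give no details. The paper avoids this second case entirely by proving a sharper pairwise statement: the center-sets $\tau(A_i)$ are \emph{linearly ordered by inclusion}. Since a chain of nonempty sets has nonempty intersection (its minimum element), the common center follows at once, with no triple case needed. The negation of comparability is precisely ``$A$ is centered at $X$ and not at $Y$, while $B$ is centered at $Y$ and not at $X$''; the paper then takes the witnessing paths $P\subseteq A$ (from a $Y$-neighbour to a $Z$-neighbour, seeing $X$) and $Q\subseteq B$ (from an $X$-neighbour to a $Z$-neighbour, seeing $Y$), together with shortest connectors $R\subseteq Z$, $S\subseteq Y$, $T\subseteq X$, and runs exactly the trichotomy you anticipated ($a=a'$ versus $aa'\in E(G)$ versus neither) on the attachments of the end of $T$ to $P$ and of the end of $S$ to $Q$, producing a theta, pyramid or prism in every case.
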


\begin{proof}
  By \cref{l:OnePath} applied to $A=A_i$, $B=A_{j}$ for some $j\neq i$
  and $X$, $Y$ and $Z$, we see that all $A_i$'s are of type path with
  respect to $X$, $Y$ and $Z$. It remains to prove that they all share
  a common center.

  We denote by $\tau(A_i)$ the sets of all elements
  $U \in \{X, Y, Z\}$ such that $A_i$ is centered at $U$.  Note that
  for all $i\in \{1, \dots, k\}$, $\tau(A_i)$ is nonempty. It is
  enough to prove that for all $i, j \in \{1, \dots, k\}$, either
  $\tau(A_i) \subseteq \tau(A_j)$ or $\tau(A_j) \subseteq \tau(A_i)$.
  Indeed, this implies that the sets $\tau(A_i)$ are linearly ordered
  by the inclusion, so that $\cap_{i\in \{1, \dots, k\}} \tau (A_i)$
  is non-empty and contains the common center that we are looking for.
 
  So suppose for a contradiction that $A= A_i$ and $B=A_j$ are such
  that $\tau(A_i)$ and $\tau(A_j)$ are inclusion-wise
  incomparable. So, up to symmetry, $A$ is centered at $X$ and not at
  $Y$ while $B$ is centered at $Y$ and not at $X$.

  Since $A$ is centered at $X$, there exists $P= u\dots u'$ in $A$
  such that $u$ sees $Y$, $u'$ sees $Z$, $P$ sees $X$, $P\sm u$ is
  anticomplete to $Y$ and $P\sm u'$ is anticomplete to $Z$.  Since $B$
  is centered at $Y$, there exists $Q= v\dots v'$ in $B$ such that $v$
  sees $X$, $v'$ sees $Z$, $Q$ sees $Y$, $Q\sm v$ is anticomplete to
  $X$ and $Q\sm v'$ is anticomplete to $Z$.

  Let $R = z\dots z'$ be a shortest path in $Z$ such that
  $u'z \in E(G)$ and $z'v'\in E(G)$. Let $S = y\dots y'$ be a shortest
  path in $Y$ such that $y$ sees $Q$ and $y'u\in E(G)$.  Let
  $T= x\dots x'$ be a shortest path in $X$ such that $x$ sees $P$ and
  $x'v\in E(G)$.  Observe that each of $P$, $Q$, $R$, $S$ and $T$ can
  be of length~0.

  Let $a$ be the neighbor of $x$ in $P$ closest to $u$ along $P$.  Let
  $a'$ be the neighbor of $x$ in $P$ closest to $u'$ along $P$.  Let
  $b$ be the neighbor of $y$ in $Q$ closest to $v$ along $Q$.  Let
  $b'$ be the neighbor of $y$ in $Q$ closest to $v'$ along $Q$.  See
  Figure~\ref{f:allPath}.

  \begin{figure}[!ht]
  \begin{center}
        \includegraphics{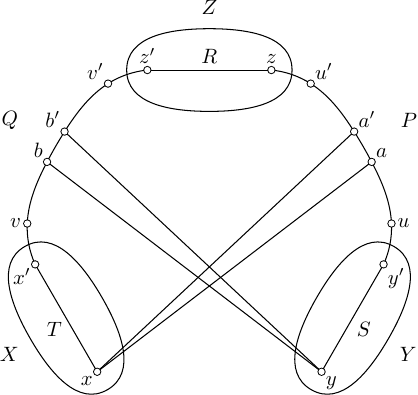}
  \end{center}
  \caption{Paths $P$, $Q$, $R$, $S$ and $T$ in the proof of Lemma~\ref{l:allPath}\label{f:allPath}}
  \end{figure}

  Suppose first that $a=a'$. Observe that $a=a' \neq u$ for otherwise
  $A$ would be centered at $Y$, contrary to our assumption. Hence,
  $ay\notin E(G)$.  If $b=b'$, then $P$, $Q$, $R$, $S$ and $T$ form a
  theta from $a$ to $b$, so $b\neq b'$.  If $bb'\in E(G)$, then $P$,
  $Q$, $R$, $S$ and $T$ form a pyramid from $a$ to $ybb'$.  If
  $bb'\notin E(G)$, then $P$, $vQb$, $b'Qv'$, $R$, $S$ and $T$ form a
  theta from $a$ to $y$ (because $ay\notin E(G)$ as already noted).
  Hence $a\neq a'$, and symmetrically we can prove that $b\neq b'$.

  Suppose now $aa'\in E(G)$.  If $bb'\in E(G)$, then $P$, $Q$, $R$,
  $S$ and $T$ form a prism from $xaa'$ to $ybb'$.  If
  $bb'\notin E(G)$, then $P$, $vQb$, $b'Qv'$, $R$, $S$ and $T$ form a
  pyramid from $y$ to $xaa'$.  Hence $aa'\notin E(G)$, and symmetrically we
  can prove that $bb'\notin E(G)$.

  We are left with the case where $a\neq a'$, $aa'\notin E(G)$,
  $b\neq b'$ and $bb'\notin E(G)$. Then $uPa$, $a'Pu'$, $vQb$,
  $b'Qv'$, $R$, $S$ and $T$ form a theta from $x$ to~$y$.
\end{proof}

The following lemma describes what happens when $K_{3, 3}$ is an
induced minor of some 3PC-free graph.  It is worth noting that it has
a true converse that we do not need to state formally. More precisely,
if in any graph six paths $A$, $B$, $C$, $P$, $Q$ and $R$ satisfy all
the properties described in \cref{l:exact1}, then they form a model
for a $K_{3, 3}$ induced minor, and moreover the graph that they
induce can be checked to be 3PC-free, see \cref{f:situations}.

Note that the statement of \cref{l:exact1} is not completely
symmetric. Namely, $A$, $B$ and $C$ are assumed to be minimal while
$X$, $Y$ and $Z$ are not.  This yields a slightly stronger statement
which is needed for the application in the proof of \cref{th:k343PC}.

\begin{lemma}
  \label{l:exact1}
  Let $G$ be a 3PC-free graph and $A$, $B$, $C$, $X$, $Y$ and $Z$ be
  connected disjoint subsets of $V(G)$ such that $X$, $Y$ and $Z$ are
  pairwise anticomplete, $A$, $B$ and $C$ are pairwise anticomplete
  and each of $A$, $B$ and $C$ sees each of $X$, $Y$ and $Z$.  Suppose
  that no connected proper subset of $A$ (resp.\ $B$ and $C$) sees
  each of $X$, $Y$ and $Z$.  Then, there exist six vertex-disjoint
  paths $A' = a\dots a'$, $B'= b\dots b'$, $C'=c\dots c'$,
  $P= p\dots p'$, $Q = q\dots q'$ and $R = r\dots r'$ in $G$ such
  that:

  \begin{itemize}
  \item Each of $A'$, $B'$ and $C'$ \emph{is equal to} exactly one of
    $A$, $B$ or $C$.
  \item Each of $X$, $Y$ and $Z$ \emph{contains} exactly one of
    $P$, $Q$ or $R$.
  \item $H= aA'a'rRr'c'C'cp'Ppa$ is a hole.
  \item $B'\sm b$ (resp.\ $B'\sm b'$, $Q\sm q$, $Q\sm q'$) is
    anticomplete to $P$ (resp.\ $R$, $A'$, $C'$).
  \item $B'$ and $Q$ both have length at most~1 (so they each contain
    at most two vertices).  
  \item Each of $P$, $R$, $A'$ and $C'$ contains at least three
    vertices and $b$ (resp.\ $b'$, $q$, $q'$) has at least two
    neighbors in $P$ (resp.\ $R$, $A'$, $C'$). 
  \item $B'$ is complete to $Q$, or $G[B'\cup Q]$ has
    four vertices and five edges.
  \end{itemize} 
\end{lemma}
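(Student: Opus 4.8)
The plan is to use the machinery of \cref{sec:type} to reduce to a situation in which all six sets are induced paths with a rigid attachment pattern, read the hole $H$ off directly, and then establish the remaining quantitative assertions by producing a 3PC whenever one of them fails.

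First I would pass to inclusion-minimal versions of $X$, $Y$, $Z$: replace each of them by an inclusion-minimal connected subset still seeing all of $A$, $B$ and $C$. This preserves every hypothesis — the shrunk sets are still pairwise anticomplete and are still each seen by $A$, $B$, $C$ (a minimal connected subset of $X$ chosen to see $A$ still sees $A$), and $A$, $B$, $C$ remain minimal (a proper connected subset of $A$ seeing the shrunk sets would see the original ones) — and the conclusion for the shrunk sets implies the conclusion as stated, since the resulting $P$, $Q$, $R$ are then contained in the original $X$, $Y$, $Z$. So assume $X$, $Y$, $Z$ are minimal as well. Now apply \cref{l:allPath} to $A$, $B$, $C$ (seeing $X$, $Y$, $Z$): they are of type path with a common center, which up to relabelling $X$, $Y$, $Z$ is $Y$. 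Apply \cref{l:allPath} again with the roles of the two triples exchanged: $X$, $Y$, $Z$ are of type path (with respect to $A$, $B$, $C$) with a common center, which up to relabelling $A$, $B$, $C$ is $B$. Minimality now forces each of the six sets to be an induced path: $A=a\dots a'$ with $a$ the unique vertex of $A$ seeing $X$ and $a'$ the unique vertex seeing $Z$ (and $A$ sees $Y$), and symmetrically for $B$ and $C$; $X=x_A\dots x_C$ with $x_A$ the unique vertex of $X$ seeing $A$ and $x_C$ the unique vertex seeing $C$ (and $X$ sees $B$), and symmetrically for $Y$ and $Z$. Hence the $A$--$X$ edge is unique and equals $ax_A$, and likewise for $a'z_A$, $cx_C$, $c'z_C$, so $H:=a\,A\,a'\,z_A\,Z\,z_C\,c'\,C\,c\,x_C\,X\,x_A\,a$ is a hole. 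Set $A'=A$, $B'=B$, $C'=C$, $P=X$, $Q=Y$, $R=Z$; then the first four bullet points of the lemma hold by construction (the anticompleteness statements just restate that $b$, $b'$, $y_A$, $y_C$ are the unique vertices of their respective paths seeing the relevant set).

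It remains to prove: $B$ and $Y$ each have at most two vertices; $b$ has at least three neighbours in $X$, $b'$ at least three in $Z$, $y_A$ at least three in $A$, $y_C$ at least three in $C$ (which gives in particular $|X|,|Z|,|A|,|C|\ge 3$); and $G[B\cup Y]$ is either complete bipartite between $B$ and $Y$ or a $K_4$ minus an edge. At this point the configuration is symmetric under the exchange $A\leftrightarrow X$, $B\leftrightarrow Y$, $C\leftrightarrow Z$ (it swaps $a\leftrightarrow x_A$, $a'\leftrightarrow x_C$, and so on), so it suffices to establish the assertions about $B$ and the $X,Z$-side; those about $Y$ and the $A,C$-side follow.

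Each of these assertions I would prove by contradiction: if it fails, $G$ contains a 3-path configuration, contradicting 3PC-freeness. The basic gadget is the hole $H$ itself, or a hole obtained from it by rerouting one of its two ``rim arcs'' through $B$ (using that $b$ sees $X$ and $b'$ sees $Z$). Depending on the failed assertion one attaches, to two \emph{opposite} arcs of such a hole, either the path $B$ — if $B$ has an interior vertex, or if $b$ or $b'$ has too few neighbours on its arc — or a shortest connecting sub path of $Y$, and reads off a theta, a pyramid, or a prism: the number of neighbours of $b$ (or $b'$) on its arc decides between a theta/pyramid and a configuration that is not a 3PC, the interior structure of $B$ controls whether the third path has length at least $2$, and the length-$0$ cases in the definitions of prism and pyramid are precisely what handles the degenerate attachments (exactly as in the proofs of \cref{l:OnePath} and \cref{l:allPath}). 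For the last bullet point one instead uses $|B|,|Y|\le 2$ together with a short direct search for a 3PC among $B\cup Y$ and the paths $A$, $C$, $X$, $Z$ to rule out every configuration of $G[B\cup Y]$ except the two permitted ones. The whole setup is short; the main obstacle is organising the (numerous) case distinctions — on $|B|$, on the neighbourhoods of $b$ and $b'$ in $X$ and $Z$, and on how $Y$ attaches to $B$ — so that each case genuinely produces an induced 3-path configuration.
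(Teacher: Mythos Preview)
Your plan is correct and follows essentially the same route as the paper: two applications of \cref{l:allPath} to pin down a common center on each side, minimality to turn $A$, $B$, $C$ into paths, read off the hole $H$, and then a case analysis producing a 3PC whenever one of the quantitative assertions fails. The one genuine difference is your preliminary step of replacing $X$, $Y$, $Z$ by inclusion-minimal connected subsets. The paper deliberately avoids this (it only extracts paths $P$, $Q$, $R$ inside $X'$, $Y'$, $Z'$ without shrinking them), but your reduction is valid: minimality of $A$, $B$, $C$ survives, and the conclusion for the shrunk sets yields the conclusion as stated. What you gain is the full symmetry $A\leftrightarrow X$, $B\leftrightarrow Y$, $C\leftrightarrow Z$, which lets you halve the case analysis; the paper instead argues for $b$ and $b'$ in detail and dispatches $q$, $q'$ by ``a symmetric argument'' without making the symmetry explicit. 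One caution for when you write out the cases: showing that $b$ has at least three neighbours in $P$ is more delicate than your sketch suggests --- the paper first rules out $\beta=\beta'$ and $\beta\beta'\in E(G)$, and each of these sub-cases already requires bringing in $Q$ (not just $H$ and $B$) and splitting further on whether $b=b'$ and on how many neighbours $b$ has in $Q$. Your phrase ``or a shortest connecting sub-path of $Y$'' shows you anticipate this, but be prepared for that nesting.
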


\begin{proof}
  By \cref{l:allPath}, $A$, $B$ and $C$ are of type path with
  respect to $X$, $Y$ and $Z$, centered at some $Y'\in \{X, Y, Z\}$.  
  By \cref{l:allPath}, $X$, $Y$ and $Z$ are of type path with
  respect to $A$, $B$ and $C$, centered at some $B' \in \{A, B,
  C\}$. We let $X'$, $Z'$, $A'$ and $C'$ be such that $\{A', B', C'\}
  = \{A, B, C\}$ and $\{X', Y', Z'\} = \{X, Y, Z\}$.    

  So, $A'$ contains some path that sees $X'$, $Y'$ and $Z'$ as in the
  definition of type path centered at $Y'$, but by the assumption
  about the minimality of $A$, $B$ or $C$, we see that this path is in
  fact $A'$ itself. So $A'$ is equal to exactly one of $A$, $B$ or
  $C$. The arguments work also with $B$ and $C$, so that
  $A'=a\dots a'$, $B'= b\dots b'$, $C'= c \dots c'$, each of $a$, $b$
  and $c$ sees $X'$, each of $a'$, $b'$ and $c'$ sees~$Z'$, each of
  $A'$, $B'$ and $C'$ sees $Y'$, each of $A'\sm a$, $B'\sm b$ and
  $C'\sm c$ is anticomplete to $X'$ and each of $A'\sm a'$, $B'\sm b'$
  is and $C'\sm c'$ is anticomplete to $Z'$.

  Also $X'$ contains a path $P = p\dots p'$ such that $p$ sees $A'$,
  $p'$ sees $C'$, $P\sm p$ is anticomplete to $A'$, $P\sm p'$ is
  anticomplete to $C'$ and $P$ sees $B'$. Note that we cannot claim
  that $X'=P$, since we made no assumption about the minimality of
  $X$.  But since $A'\sm a$ is anticomplete to $X'$ and $P\sm p$ is
  anticomplete to $A'$, the only possible edge between $P$ and $A'$ is
  $pa$.  Similarly, $p'c$ is the only edge between $P$ and $C'$.
  Moreover, $P$ sees $B'$, and since $B\sm b$ is anticomplete to $X'$,
  we know that $b$ sees $P$ (and not only $X'$).

  Similarly, $Z'$ contains a path $R = r\dots r'$ with $ra'\in E(G)$,
  $r'c'\in E(G)$, $R\sm r$ is anticomplete to $A'$, $R\sm r'$ is
  anticomplete to $C'$ and such that $b'$ sees~$R$.  Observe that
  $A'$, $P$, $C'$ and $R$ form a hole $H$.

  Also $Y'$ is of type path with respect to $A$, $B$ and $C$ and
  centered at $B'$. So, $Y'$ contains a path $Q = q\dots q'$ such that
  $q$ sees $A'$, $q'$ sees $C'$, $Q\sm q$ is anticomplete to $A'$,
  $Q\sm q'$ is anticomplete to $C'$ and $Q$ sees $B'$.

  Let $\alpha$ be the neighbor of $q$ in $A'$ closest to $a$ along
  $A'$.  Let $\alpha'$ be the neighbor of $q$ in $A'$ closest to $a'$
  along $A$.  Let $\beta$ be the neighbor of $b$ in $P$ closest to $p$
  along~$P$.  Let $\beta'$ be the neighbor of $b$ in $P$ closest to
  $p'$ along $P$.  Let $\gamma$ be the neighbor of $q'$ in $C'$
  closest to $c$ along $C'$.  Let $\gamma'$ be the neighbor of $q'$ in
  $C'$ closest to $c'$ along $C'$.  Let $\delta$ be the neighbor of
  $b'$ in $R$ closest to $r$ along $R$.  Let $\delta'$ be the neighbor
  of $b'$ in $R$ closest to $r'$ along $R$.  See \cref{f:exact1}.

  \begin{figure}[!ht]
    \begin{center}
      \includegraphics{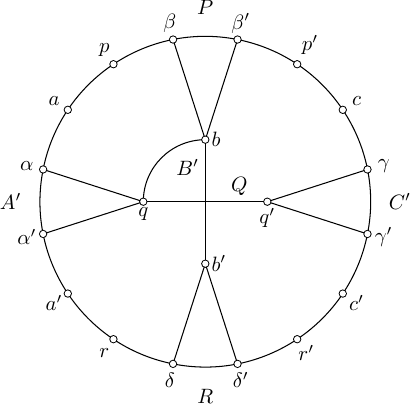}
    \end{center}
    \caption{Paths $A'$, $B'$, $C'$, $P$, $Q$ and $R$ in the proof of
      Lemma~\ref{l:exact1}\label{f:exact1}}
  \end{figure}
  
  Suppose that $B'$ has length at least~2. Then $B$ and $H$ contains a
  theta, a prism or a pyramid, namely from $\beta$ (if $\beta= \beta'$)
  or $b\beta\beta'$ (if $\beta\beta' \in E(G)$) or $b$ (otherwise), to
  $\delta$ (if $\delta = \delta'$) or $b'\delta\delta'$ (if
  $\delta\delta' \in E(G)$) or $b'$ (otherwise).  Hence $B'$ has length
  at most~1, meaning that either $b=b'$ or $bb'\in E(G)$.  Similarly,
  $Q$ has length at most~1 and $q=q'$ or $qq'\in E(G)$.

  Suppose that $\beta=\beta'$.  Then $b=b'$ for otherwise $B$ and $H$
  contain a theta (if $\delta=\delta'$), or a pyramid from $\beta$ to
  $b'\delta\delta'$ (if $\delta\delta'\in E(G)$), or a theta from
  $\beta$ to $b'$ (otherwise). Since $B'$ sees $Q$, $b$ has a neighbor
  in $Q$.  If $b$ has a unique neighbor in $Q$, say $q$ up to symmetry
  (so either $q=q'$, or $q\neq q'$ and $bq'\notin E(G)$), then the
  three paths $\beta b q$, $\beta P p a A' \alpha q$ and
  $\beta P p' c C' \gamma q' q$ form a theta from $\beta$ to $q$.  So,
  $b$ has two neighbors in $Q$. Hence, the three paths $\beta b$,
  $\beta P p a A' \alpha q$ and $\beta P p' c C' \gamma q'$ form a
  pyramid from $\beta$ to $bqq'$.  We proved that $\beta\neq \beta'$.

  Suppose that $\beta\beta'\in E(G)$.  Then $b=b'$ as otherwise $B$
  and $H$ contains a pyramid from $\delta$ to $b\beta\beta'$ (if
  $\delta=\delta'$), a prism from $b\beta\beta'$ to $b'\delta\delta'$
  (if $\delta\delta'\in E(G)$) or a pyramid from $b'$ to
  $b\beta\beta'$ (otherwise). Since $B'$ sees $Q$, $b$ has a neighbor
  in $Q$.  If $b$ has a unique neighbor in $Q$, say $q$ up to symmetry
  (so either $q=q'$, or $q\neq q'$ and $bq'\notin E(G)$), then the
  three paths $qb$, $q \alpha A' a p P \beta$ and
  $q q' \gamma C' c p' P \beta'$ form a pyramid from $q$ to
  $b\beta \beta'$.  So, $b$ has two neighbors in $Q$. Hence, the three
  paths $b$, $q \alpha A' a p P \beta$ and
  $q' \gamma C' c p' P \beta'$ form a prism from $bqq'$ to
  $b\beta\beta'$.  We proved that $\beta\beta' \notin E(G)$.  This
  implies that $P$ contains at least three vertices and $b$ has at
  least two neighbors in $P$.

  
  By a symmetric argument, we can prove that $R$ (resp.\ $A'$, $C'$)
  contains at least three vertices and $b'$ (resp.\ $q$, $q'$) has at
  least two neighbors in it.  It remains to prove that $B'$ is
  complete to $Q$, or $B'\cup Q$ induces a graph with four vertices
  and five edges.  So suppose that $B'$ is not complete to $Q$.

  
  Since $B'$ sees $Q$ and $B'$ is not complete to $Q$, there is at
  least one edge and at least one non-edge with ends in $B$ and $Q$.
  So, there must be a vertex, either in $B$ or $Q$, that is incident
  to such an edge and such a non-edge. Up to symmetry, we assume that
  this vertex is $b$ and $bq\in E(G)$ (so $bq'\notin E(G)$ and
  $q\neq q'$).  If $b=b'$, or if $b\neq b'$ and $G[B'\cup Q]$ has only
  three edges (namely $bb'$, $qq'$ and $bq$), then $bqq'$,
  $b\beta' P p' c C' \gamma q'$ and
  $b b'\delta' R r' c' C' \gamma' q'$ form a theta from $b$ to $q'$.
  We proved that $b\neq b'$ and $G[B'\cup Q]$ has at least four edges.
  So, $G[B'\cup Q]$ has four vertices and it remains to prove that it
  has exactly five edges.  So suppose for a contradiction that
  $G[B'\cup Q]$ has exactly four edges.

  If $b'q'\in E(G)$ (and therefore $b'q\notin E(G)$), then $bqq'$,
  $bb'q'$ and $b\beta' P p' c C' \gamma q'$ form a theta from $b$ to
  $q'$.  If $b'q\in E(G)$ (and therefore $b'q'\notin E(G)$), then
  $q'q$, $q' \gamma C' c p' P \beta' b$ and
  $q' \gamma' C' c' r' R \delta' b'$ form a pyramid from $q'$ to
  $bb'q$.
\end{proof}

\begin{lemma}
  \label{l:exact2}
  Let $G$ be a 3PC-free graph and $A$, $B$, $C$, $X$, $Y$ and $Z$ be
  connected disjoint subsets of $V(G)$ such that $X$, $Y$ and $Z$ are
  pairwise anticomplete, $A$, $B$ and $C$ are pairwise anticomplete,
  and each of $A$, $B$ and $C$ sees each of $X$, $Y$ and $Z$.  Suppose
  that no connected proper subset of $A$ (resp.~$B$ and $C$) sees each
  of $X$, $Y$ and $Z$.  Then exactly one of $A$, $B$ and $C$ contains
  at most~2 vertices.
\end{lemma}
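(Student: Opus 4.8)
The plan is to derive \cref{l:exact2} directly from \cref{l:exact1}, since the two lemmas have exactly the same hypotheses. Apply \cref{l:exact1} to $A$, $B$, $C$, $X$, $Y$, $Z$; it produces six vertex-disjoint paths $A' = a\dots a'$, $B' = b\dots b'$, $C' = c\dots c'$, $P$, $Q$, $R$ with $\{A', B', C'\} = \{A, B, C\}$, together with the listed structural properties. Two of these properties are precisely what is needed here.

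First, \cref{l:exact1} asserts that $B'$ has length at most~$1$, so $B'$ contains at most two vertices. Second, it asserts that $q$ (resp.~$q'$) has at least three neighbors in $A'$ (resp.~$C'$), and in particular that each of $A'$ and $C'$ contains at least three vertices. Since $\{A', B', C'\} = \{A, B, C\}$, this says that the member of $\{A, B, C\}$ that equals $B'$ contains at most two vertices, while the two members that equal $A'$ and $C'$ each contain at least three vertices. Hence exactly one of $A$, $B$, $C$ contains at most two vertices, which is the claim.

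There is essentially no obstacle: the statement is a repackaging of part of the conclusion of \cref{l:exact1}. The only point to be careful about is that \cref{l:exact1} was deliberately stated with the asymmetric minimality hypothesis on $A$, $B$, $C$ (and not on $X$, $Y$, $Z$), so that $B'$ is guaranteed to be one of $A$, $B$, $C$ rather than merely a subpath sitting inside one of them; this is exactly what lets us transfer the "at most two vertices'' and "at least three vertices'' bounds from $B'$, $A'$, $C'$ back to the original sets $A$, $B$, $C$. I would therefore phrase the proof in one short paragraph invoking \cref{l:exact1} and reading off the two bullet points above.
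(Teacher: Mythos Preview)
Your proposal is correct and matches the paper's own proof, which simply reads ``Follows directly from \cref{l:exact1}.'' You have spelled out precisely which conclusions of \cref{l:exact1} are being read off (that $B'$ has at most two vertices while $A'$ and $C'$ have at least three, and that $\{A',B',C'\}=\{A,B,C\}$), which is exactly the intended argument.
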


\begin{proof}
  Follows directly from \cref{l:exact1}.
\end{proof}

We may now prove \cref{th:k343PC} that we restate.

\longer*

\begin{proof}
  Suppose for a contradiction that a 3PC-free graph $G$ contains
  $K_{3, 4}$ has an induced minor. So, $G$ contains seven disjoint
  connected sets $X$, $Y$, $Z$, $A$, $B$, $C$ and $D$ such that $X$,
  $Y$ and $Z$ are pairwise anticomplete, $A$, $B$, $C$ and $D$ are
  pairwise anticomplete, and each of $X$, $Y$ and $Z$ sees each of
  $A$, $B$, $C$ and $D$.  We suppose that these sets are chosen
  subject to the minimality of $A \cup B \cup C \cup D$.  It follows
  that no proper connected subset of $A$ (resp.~$B$, $C$, $D$) sees
  $X$, $Y$ and $Z$ (note that it is important here that no assumption is made about the minimality of $X$, $Y$ and $Z$).

  By \cref{l:exact2} applied to $A$, $B$, $C$, $X$, $Y$ and $Z$,
  exactly one of $A$, $B$ and $C$ has size at most~2, say
  $|A| \leq 2$, $|B|>2$ and $|C|>2$. Hence, by \cref{l:exact2}
  applied to $B$, $C$, $D$, $X$, $Y$ and $Z$, we have $|D|\leq
  2$. Hence, $A$, $B$, $D$, $X$, $Y$ and $Z$ contradict
  \cref{l:exact2}.
\end{proof}

\section{Open questions}
\label{sec:open}

We need pyramids in \cref{th:k343PC} only for the sake of the precise
description of \cref{l:exact1}, see \cref{f:3PC-pyramid}.  We
therefore wonder whether pyramids in \cref{th:k343PC} are really
needed. More precisely, we do not know whether a (theta, prism)-free
graph that contains $K_{3, 4}$ as an induced minor exists.

A \emph{wheel} is a graph made of a hole called the \emph{rim}
together with a vertex called the \emph{center} that has at least three
neighbors on the rim.  It is \emph{even} if the center has an even
number of neighbors in the rim. It is well known (and easy to check)
that even-hole-free graphs contain no prisms, no thetas and no even
wheels as induced subgraphs, since each of these configurations
implies the presence of an even hole. Conversely, many theorems about
even-hole-free graphs suggest that (theta, prism, even wheel)-free
graphs, that are called \emph{odd signable graphs}, capture the
essentials structural properties of even-hole-free graphs, see~\cite{vuskovic:evensurvey,vuskovic:truemper}. We believe
that the following is true.

\begin{conjecture}
  \label{conj:es}
  If $G$ is an odd signable graph (in particular if $G$ is an even-hole-free
  graph), then $G$ does not contain $K_{3, 3}$ as an induced minor.
\end{conjecture}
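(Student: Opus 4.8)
The plan is to assume for contradiction that some odd signable graph $G$ contains $K_{3,3}$ as an induced minor, fix a model $\{A,B,C\}\cup\{X,Y,Z\}$ of $K_{3,3}$ in $G$ with $A\cup B\cup C$ minimal (so that no proper connected subset of $A$, $B$ or $C$ sees all of $X$, $Y$, $Z$), and re-run the analysis of \cref{sec:33} with ``3PC-free'' weakened to ``(theta, prism)-free and even-wheel-free''. The only place where 3PC-freeness is used in \cref{l:type}, \cref{l:OnePath}, \cref{l:allPath} and \cref{l:exact1} beyond theta- and prism-freeness is to rule out pyramids. So the work splits into two parts: (a)~when the induced subgraph on the model is 3PC-free, \cref{l:exact1} applies verbatim and we must extract an even wheel from the structure it describes; (b)~when the model induces a graph containing a pyramid, we must first obtain a ``pyramid version'' of \cref{l:exact1} and then again extract an even wheel.

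For part~(a): given the six paths $A'=a\dots a'$, $B'=b\dots b'$, $C'=c\dots c'$, $P$, $Q$, $R$ and the hole $H=aA'a'rRr'c'C'cp'Ppa$ of \cref{l:exact1}, note that $b$ has at least three neighbours on $H$, all on the sub-path $P$, and that $b$ is anticomplete to $A'$, $C'$, $R$; hence $(H,b)$ is a wheel, and likewise $(H,b')$, $(H,q)$, $(H,q')$ are wheels whose centres attach, respectively, to the arcs $P$, $R$, $A'$, $C'$ of $H$. Since $G$ has no even wheel, each of these four wheels is odd. If $b\neq b'$, replace the span of $b$'s neighbours along $P$ by the single vertex $b$; this yields a hole $H_1$ on which $b'$ sees, besides its at least three neighbours on $R$, the vertex $b$, so $b'$ has on $H_1$ exactly one neighbour more than on $H$, forcing $(H,b')$ and $(H_1,b')$ to have opposite parities --- a contradiction. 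If $b=b'$, then since $B'$ is the single vertex $b$, the last item of \cref{l:exact1} forces $b$ complete to $Q$, in particular $bq\in E(G)$; routing $b$, then along $P$ from the neighbour of $b$ nearest $p$ up to $p$, then $pa$, then along $A'$ to $a'$, then $a'r$, then along $R$ from $r$ up to the neighbour of $b$ nearest $r$, and back to $b$, gives a hole $H_1$ on which $q$ sees its at least three neighbours in $A'$ together with $b$; the symmetric hole $H_2$ through $C'$ gives the same with $C'$ in place of $A'$. Oddness of $(H_1,q)$ and $(H_2,q)$ forces $q$ to have an even number of neighbours in each of $A'$ and $C'$, contradicting oddness of $(H,q)$. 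The cases ``$Q$ has length~$1$'' and ``$q=q'$'' are symmetric, so part~(a) is complete, which already settles the conjecture for 3PC-free (in particular, for $K_4$-free odd signable) graphs.

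For part~(b): redo \cref{l:type}--\cref{l:allPath} for (theta, prism)-free graphs, carrying the pyramids that used to be contradictions as new cases, and describe the finitely many configurations in which the $K_{3,3}$ model forces a pyramid inside $A\cup B\cup C\cup X\cup Y\cup Z$. In each such configuration the pyramid supplies a triangle $t_1t_2t_3$ and an apex $v$ joined to it by three internally vertex-disjoint paths; the union of two of these paths is a hole $H'$, and because every branch set of the model is attached to all three of $X$, $Y$, $Z$, one of the remaining branch sets contains a vertex with at least three neighbours on $H'$ together with an extra neighbour among $\{v,t_1,t_2,t_3\}$, so the same parity trick as in part~(a) produces an even wheel. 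The bookkeeping of which branch set attaches where is of the same flavour as the counting in the proof of \cref{th:134}, now interleaved with parity constraints.

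The main obstacle is part~(b). Once pyramids are admitted, \cref{l:type}, \cref{l:OnePath} and \cref{l:allPath} no longer yield the clean ``common-centre'' picture, the number of configurations grows considerably, and one must rule out configurations in which every individual wheel is odd and an even wheel appears only inside a cleverly chosen shortcut hole. Isolating the right finite family of shortcut holes uniformly across all cases, and proving that a $K_{3,3}$ model always makes one of them even, is where the real difficulty lies --- presumably the reason the statement remains only a conjecture. A reasonable first test of the whole strategy is to verify that the explicit (theta, prism)-free graph of \cref{f:3PC-pyramid} indeed contains an even wheel.
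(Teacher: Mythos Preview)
The statement is \cref{conj:es}, which the paper lists as an \emph{open conjecture} in \cref{sec:open}; there is no proof in the paper to compare your attempt against. That said, your part~(a) is correct and constitutes a partial result not recorded in the paper: when the chosen minimal $K_{3,3}$ model induces a pyramid-free subgraph, \cref{l:exact1} applies to that subgraph, and your parity trick goes through. If $b\neq b'$, rerouting $H$ through $b$ gives a hole $H_1$ on which $b'$ has exactly one more neighbour than on $H$ (its neighbours in $R$ together with $b$), so one of the wheels $(H,b')$, $(H_1,b')$ is even; if $b=b'$ and, by the $B'\!\leftrightarrow Q$ symmetry of the conclusion of \cref{l:exact1}, also $q=q'$, then the wheels $(H,q)$, $(H_1,q)$, $(H_2,q)$ have spoke-counts $m_{A'}+m_{C'}$, $m_{A'}+1$, $m_{C'}+1$, and these cannot all be odd. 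In particular the conjecture holds for pyramid-free odd-signable graphs, i.e., for graphs that are simultaneously $3$PC-free and even-wheel-free. (Your parenthetical ``$K_4$-free odd signable'' is not the same class: pyramids contain a triangle but no $K_4$.)

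The gap is exactly where you place it, in part~(b), and it is a real gap rather than routine bookkeeping. Once pyramids are allowed in the model, \cref{l:OnePath} and \cref{l:allPath} fail outright, the common-centre picture of \cref{l:exact1} collapses, and there is no longer a single hole $H$ carrying all four hubs $b,b',q,q'$ on disjoint arcs. Your sketch asserts that in every pyramid configuration some branch-set vertex will see a hole $H'$ in at least three places together with one extra neighbour, but nothing in the hypotheses forces this: a branch set may meet a path of the pyramid in a single edge, and the parity trick requires two holes whose spoke-counts at a common centre differ by exactly one, which demands much tighter control over attachments than a bare pyramid provides. Producing such a pair of holes uniformly across all pyramid configurations is the missing idea; neither your proposal nor the paper supplies it, which is precisely why the statement is still a conjecture.
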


Observe that we do not know whether the even-hole-free layered wheels
contain $K_{3, 3}$ as an induced minor.
A proof of \cref{conj:es} would imply that they do not.
We also propose the following.

\begin{conjecture}
  \label{conj:k6}
  If $G$ contains $K_6$ as a minor, then $G$ contains a triangle (as a
  subgraph) or $G$ contains $K_{3, 3}$ as an induced minor. 
\end{conjecture}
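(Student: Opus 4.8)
The plan is to assume $G$ is triangle-free — otherwise we are already done — and to extract a $K_{3,3}$ induced-minor model from a $K_6$ minor. Fix a minimal induced-minor model $\{V_1,\dots,V_6\}$ of $K_6$ in $G$; restricting $G$ to $\bigcup_i V_i$ loses nothing, so assume $G=G[V_1\cup\dots\cup V_6]$. Since $G$ is triangle-free, no three branch sets can be singletons (three pairwise-seeing single vertices would form a triangle), so at least four of the $V_i$ have at least two vertices. By \cref{lem:girthtree} each $G[V_i]$ has no cycle of length more than $5$, and by \cref{lem:deg1inbranch} each degree-one vertex of $G[V_i]$ is the unique attachment point of some private branch set. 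Finally, for any branch set $V_i$ and any three of the remaining five, \cref{l:type} — together with triangle-freeness, which rules out the type-triangle — shows that $V_i$ is of type path or of type claw with respect to those three; in particular there is an induced path or an induced claw inside $V_i$ with three controlled endpoints realizing the attachments. The target is a $K_{3,3}$ induced-minor model, that is, six connected sets $A,B,C,X,Y,Z$ with $X,Y,Z$ pairwise anticomplete, $A,B,C$ pairwise anticomplete, and each of $A,B,C$ seeing each of $X,Y,Z$ — exactly the configuration that appears in (and whose unstated converse accompanies) \cref{l:exact1}.

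The core step is to split the six branch sets into two triples, say $\{V_1,V_2,V_3\}$ and $\{V_4,V_5,V_6\}$, to let the second triple play the ``hub'' roles $X,Y,Z$, and to trim each branch set inside its near-tree structure so that within each triple the trimmed sets become pairwise anticomplete while each trimmed set still sees all three sets of the other triple. The guiding example is a subdivision of $K_6$: there one deletes exactly the six subdivision vertices lying on the intra-triple edges, and in the trimmed model each part becomes pairwise anticomplete while still reaching the other part. In general the type path / type claw description should play the role of those deletions by locating, inside each branch set, a cut vertex or cut edge separating the attachment points towards one intended side from those towards the other. Once all six trims are carried out, the required anticomplete conditions hold by the choice of the cut vertices and edges, and we obtain the $K_{3,3}$ induced-minor model.

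The hard part — and the reason this remains a conjecture — is the degenerate situation in which some branch set cannot be trimmed away from the others, typically a single high-degree vertex, or a very short subtree, adjacent to representatives of branch sets in both intended triples at once. I expect these cases to be attacked by trying the various splits (there are ten bipartitions of the six branch sets into two triples, and \cref{l:type} restricts, for a fixed branch set, how its attachments to the other five can interleave), together with a direct analysis of the few genuinely rigid configurations that survive every split, where one would hope to contradict triangle-freeness — which already forbids three singleton branch sets and the type-triangle, and should similarly constrain the rigid cases. Checking, at the end, that the assembled six sets indeed form a $K_{3,3}$ induced-minor model is then routine.
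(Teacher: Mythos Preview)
The statement you are trying to prove is \cref{conj:k6}, which the paper explicitly presents as an open \emph{conjecture} in \cref{sec:open}; the paper offers no proof of it, only a discussion of why the statement appears to be sharp (the $K_5$ example of \cref{f:K5minor}, the layered wheels, etc.) and of how it relates to \cref{conj:es} and to the known bound on the treewidth of triangle-free odd-signable graphs. There is therefore no proof in the paper to compare your attempt against.

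Your proposal is not a proof either, and you say as much: the sentence ``The hard part --- and the reason this remains a conjecture --- is the degenerate situation\dots'' concedes that the central step, trimming the six branch sets so that each triple becomes pairwise anticomplete while retaining all cross-edges, is exactly what you do not know how to do in general. The subdivision-of-$K_6$ heuristic is correct and instructive, and your observations that triangle-freeness forbids three singleton branch sets and the type-triangle in \cref{l:type} are valid, but the leap from ``type path or type claw with respect to any chosen three neighbours'' to ``there is a bipartition and a trimming that works'' is precisely the missing idea. In particular, a single vertex $v$ forming an entire branch set $V_i=\{v\}$ must see all five other branch sets, and no trimming of $V_i$ is possible; with two such singleton sets (which triangle-freeness allows), every bipartition places at least one singleton on each side or both on the same side, and in either case the anticompleteness condition on that side cannot be achieved by trimming the singleton itself --- it must come entirely from trimming its partners, which may destroy their attachments to the other side. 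Your sketch does not address this, and until it does, the argument remains at the level of a plan rather than a proof.
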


In~\cite{DBLP:journals/dm/CameronSHV18}, it is proved that
triangle-free odd-signable graphs (in particular (triangle,
even-hole)-free graphs) have treewidth at most~5 and therefore do not
contain $K_6$ as a minor. So, provided that \cref{conj:es} is true,
\cref{conj:k6} is just a more precise statement.

Here are some remarks about \cref{conj:k6}. It is false with a $K_5$
assumption instead of a $K_6$ assumption, see \cref{f:K5minor} where
an (even hole, triangle)-free graph with a $K_5$ minor, first
discovered in~\cite{DBLP:journals/jgt/ConfortiCKV00}, is represented.
It is false with a ``$K_{3,4}$ as an induced minor or triangle''
conclusion because of the layered wheels.  Provided that
\cref{conj:es} is true, it is false with a ``$K_{3,3}$ as an induced
minor or 3PC as an induced subgraph'' conclusion, or with a
``$K_{3,3}$ as an induced minor or $K_4$ as subgraph'' conclusion,
because of the even-hole-free layered wheels that are pyramid-free and
$K_4$-free by \cref{th:ehf-lw}.  \Cref{conj:k6} would therefore
provide a statement that is best possible in many ways.

\begin{figure}[!ht]
  \begin{center}
    \includegraphics[scale=1.6]{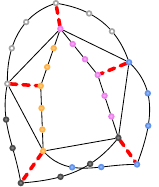}
  \end{center}
  \caption{A $K_5$ minor in an (even hole, triangle)-free graph.
    To see even hole-freeness first note that no even hole can contain a fat red edge.
  \label{f:K5minor}}
\end{figure}

It might be interesting to study the implications of a $K_{3, 3}-e$
induced minor in an even-hole free graph, where $K_{3, 3}-e$ is the
graph obtained from $K_{3, 3}$ by removing one edge.
In \cref{f:K33-eminor}, an even-hole-free graph that contains $K_{3, 3}-e$
as an induced minor is represented, and we observe that this graph plays an important role in the structural study of even-hole-free graphs,
see~\cite{vuskovic:evensurvey}.
In \cref{f:turtle}, another example of
a graph that contains $K_{3, 3}-e$ as an induced minor is represented, and we observe that  this graph contains an even wheel.

\begin{figure}[!ht]
  \begin{center}
    \includegraphics[scale=1.3]{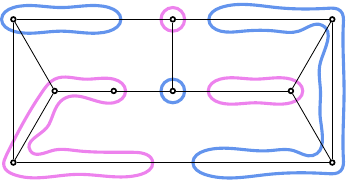}
  \end{center}
  \caption{A $K_{3, 3}-e$ induced minor in an (even hole,
    triangle)-free graph\label{f:K33-eminor}}
\end{figure}

\begin{figure}[!ht]
  \begin{center}
    \includegraphics[scale=1.4]{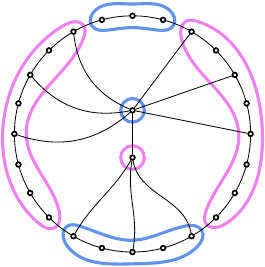}
  \end{center}
  \caption{A $K_{3, 3}-e$ induced minor in a graph without triangles and without thetas\label{f:turtle}}
\end{figure}

More generally, we believe that studying implications between
different containment relations for different kinds of graphs might
have more applications.  For instance, this approach is used
in~\cite{DBLP:conf/iwoca/DallardDHMPT24} to design a polynomial
time algorithm that decides whether an input graph contains $K_{2, 3}$
as an induced minor.  We wonder what is the complexity of detecting
$K_{3, 3}$ as an induced minor.

\section{Acknowledgement}

We are grateful to Marthe Bonamy, Sepehr Hajebi, Martin Milani\v c,  Sophie Spirkl and Kristina Vu\v{s}kovi\'c for helpful discussions.

Maria Chudnovsky is supported by NSF-EPSRC Grant DMS-2120644 and by
AFOSR grant FA9550-22-1-0083.

Meike Hatzel is supported by the Federal Ministry of Education and Research (BMBF) and by a fellowship within the IFI programme of the German Academic Exchange Service (DAAD) and by the Institute for Basic Science (IBS-R029-C1).

Tuukka Korhonen is supported by the Research Council of Norway via the project BWCA (grant no. 314528).

Nicolas Trotignon is partially supported by the French National Research Agency under research grant ANR DIGRAPHS ANR-19-CE48-0013-01 and the LABEX MILYON  (ANR-10-LABX-0070) of Université de Lyon, within the program Investissements d’Avenir (ANR-11-IDEX-0007) operated by the French National Research Agency (ANR).

Part of this  research was conducted during two Graph Theory Workshops  at the McGill 
University Bellairs Research Institute, and we express our gratitude to the institute and to the organizers of the workshop.
 
 Part of this work was done when Nicolas Trotignon visited Maria Chudnovsky at Princeton University with generous support of the H2020-MSCA-RISE project CoSP- GA No. 823748.


\end{document}